\numberwithin{equation}{section}
\newtheorem{theorem}{Theorem}[section]
\newtheorem{lemma}[theorem]{Lemma}
\newtheorem{proposition}[theorem]{Proposition}
\theoremstyle{definition}
\newtheorem{remark}[theorem]{Remark}
\newtheorem{definition}[theorem]{Definition}
\newcommand{\M}{{\mathcal M}}
\begin{document}

\title[]{Noncommutative ergodic averages of balls and spheres over Euclidean spaces}

\author{Guixiang Hong}
\address{School of Mathematics and Statistics, Wuhan University, Wuhan 430072  China\\
\emph{E-mail address: guixiang.hong@whu.edu.cn}}

\thanks{\small {{\it MR(2010) Subject Classification}.} Primary 46L52, 37A15; Secondary 46L51, 42B25.}
\thanks{\small {\it Keywords.}
Noncommutative $L_p$ space, transference principle, ball and sphere averages, maximal ergodic theorem, individual ergodic theorem, spectral method.}

\maketitle

\begin{abstract}
In this paper, we establish a noncommutative analogue of Calder\'on's transference principle, which allows us to deduce noncommutative ergodic maximal inequalities from the special case---operator-valued maximal inequalities. As applications, we deduce dimension-free estimates of noncommutative Wiener's maximal ergodic inequality and noncommutative Stein-Calder\'on's maximal ergodic inequality over Euclidean spaces. We also show the corresponding individual ergodic theorems. To show Wiener's pointwise ergodic theorem, we construct a dense subset on which pointwise convergence holds following a somewhat standard way. To show Jones' pointwise ergodic theorem, we use again transference principle together with Littlewood-Paley method, which is different from Jones' original variational method that is still unavailable in the noncommutative setting. 
\end{abstract}

\section{Introduction}

Let $\beta_r$ (resp. $\sigma_r$) be the normalized Lebesgue measure on the Euclidean ball $\{v\in\mathbb{R}^n;\;|v|_{\ell_2}\leq r\}$ (resp. the Euclidean sphere $\{v\in\mathbb{R}^n:\;|v|_{\ell_2}=r\}$). Let $(X,m)$ be a standard measure space on which $\mathbb{R}^n$ acts measurably by measure preserving transformation $\pi$. Let $\pi(\beta_r)$ (resp. $\pi(\sigma_r)$) denotes the operator canonically associated to $\beta_r$ (resp. $\sigma_r$) on $L_p(X)$.
Wiener's pointwise ergodic theorem \cite{Wie39} asserts that $\pi(\beta_r)f(x)$ convergence to a limit as $r\rightarrow\infty$ for almost every $x\in X$ provided $f\in L_p(X)$ with $1\leq p<\infty$. The limit is given by $F(f)$, the projection of $f$ to the fixed point subspace $\{g\in L_p(X):\;\pi(v)g=g,\;\forall v\in\mathbb{R}^n\}$. While Jones' pointwise ergodic theorem \cite{Jon93} asserts $\pi(\sigma_r)f(x)$ convergence to $F(f)$ as $r\rightarrow\infty$ for almost every $x\in X$ provided $f\in L_p(X)$ with $n/(n-1)< p<\infty$ and $n>2$ (See \cite{Lac95} for the case $n=2$).

\bigskip

The main tool used in the proof of pointwise convergence is maximal inequality. In both the ball and sphere cases, the ergodic maximal inequalities are deduced, through Calder\'on's transference principle, from the corresponding maximal inequalities in the case $X=\mathbb{R}^n$ and the action given by translation, that is, from the Hardy-Littlewood maximal inequality (in the ball case) and Stein's spherical maximal inequality (in the sphere case). Once the ergodic maximal inequality is available, to show the individual ergodic theorem it suffices to identify a dense subset on which pointwise convergence holds. That the method in constructing a dense subset in the ball case is now standard. However, it is usually a difficult task to identify a dense subset in the sphere case since the spherical measure is singular.  See Jones' original proof \cite{Jon93}.

\bigskip

The main purpose of  this paper is to establish Wiener's and Jones' results in the noncommutative setting. That means, we are going to build maximal ergodic theorems and then pointwise ergodic theorems
for general $W^*$-dynamical system $(\M,\tau,\mathbb{R}^n,\alpha)$, where $(\M,\tau)$ is a von Neumann algebra equipped with a trace $\tau$ and $\alpha:\;\mathbb{R}^n\rightarrow Aut(\M)$ is a continuous trace-preserving group homomorphism (also called an action) in the weak $*$-topology. If we take for $\M$ the algebra $L_\infty(X,m)$ with $(X,m)$ a measure space, $\tau$ the associated integral and $\alpha$ induced by an invertible measure-preserving transformation of $X$, we will recover Wiener's and Jones' results.

\bigskip

For the main purpose, we first establish a noncommutative version of Calder\'on's transference principle. Calder\'on's original arguments do not work in the present setting, since there does not exist perfect notion of ``point'' on the noncommutative measure spaces. We overcome this difficulty using the ideas developed in the theory of vector-valued noncommutative $L_p$ spaces. The noncommutative transference principle reduce maximal ergodic inequalities to the operator-valued maximal inequalities, which have been shown in \cite{Mei07}, \cite{Hon13}. It is worth to mention that in \cite{Mei07}, Mei used noncommutative Doob's inequality \cite{Jun02} to prove the Hardy-Littlewood maximal inequality on $\mathbb R^n$, which yields that the bounds are of order $O(2^n)$. While in \cite{Hon13}, the author show that the bounds could be taken to be independent of $n$ by adapting Stein-Str\"omberg's idea \cite{StSt83}, which is interesting in its own right.

\bigskip

As in the classical setting, with maximal inequality at hand, to use density arguments to show the pointwise convergence, it suffices to find some dense subset such that pointwise convergence holds on it. In Wiener's case, we construct a dense subset following a somewhat standard way, see Section 4 below. However, since sphere measures are singular, the dense subset contructed previously does not work in Jones' case. On the other hand, Jones' original variational method remains an open problem in the noncommutative setting. Our first attempt is via spectral method. We construct dense subsets on which pointwise convergence holds only when the dimension of Euclidean spaces $n\geq4$, see Remark \ref{n34}. Motivated by Rubio de Francia's proof of Stein's spherical maximal inequality (see for instance \cite{Rub86} \cite{Jam09}), we use Littlewood-Paley function to decompose spherical means into pieces, and use transference principle to show each piece satisfies a generalized noncommutative Wiener's ergodic theorem. Summing up all the pieces, we obtain noncommutative Jones' ergodic theorem for all $n\geq3$, see Section 5 below. 

\bigskip

As it is well-known that in the classical setting, Wiener's ergodic theorem inspires many mathematicians to study the ball and sphere averaging problems in ergodic theory associated to more general groups, see for instance the survey paper by Nevo \cite{Nev06}. We expect similar story would take place in the noncommutative setting. Actually, noncommutative ergodic theory has been developed since the very beginning of the theory of ``rings of operators''. However at the early stage, only mean ergodic theorems have been obtained. It is until 1976 after Lance's pioneer work \cite{Lan76} that the study of individual ergodic theorems really took off. Lance proved
that the ergodic averages associated with an automorphism of a
$\sigma$-finite von Neumann algebra which leaves invariant a
normal faithful state converge almost uniformly. Lance's work motivated some mathematicians to study individual ergodic theorems associated to general groups (see for instance \cite{JuXu06} and references therein). All these results can be regarded as individual ergodic theorems in the case $p=\infty$, where maximal ergodic theorems hold trivially. On the other hand, Yeadon
\cite{Yea77} obtained a maximal and pointwise ergodic theorem in the preduals of
semifinite von Neumann algebras. Yeadon's theorem provides a
maximal ergodic inequality which might be understood as a weak
type $(1,1)$ inequality. In contrast
with the classical theory, the noncommutative nature of these
weak type $(1, 1)$ inequalities seems a priori unsuitable for
classical interpolation arguments. The breakthrough was made in the previously quoted paper \cite{JuXu06} by Junge and Xu. They established a sophisticated real interpolation method using well-established noncommutative $L_p$ theory, which together with Yeadon's weak type $(1,1)$ inequality allows them to obtain the noncommutative Dunford-Schwartz maximal ergodic theorem, thus the noncommutative individual ergodic theorem in $L_p$ spaces for all $1\leq p<\infty$. This breakthrough motivates further reserach on noncommutative ergodic theorems including the present paper, see also \cite{Ana06} \cite{Hu08} \cite{Bek08} \cite{Hon} \cite{Lit14} \cite{HoSu} and references therein.

\bigskip

This paper is organized as follows. In the next section, we  give some necessary preliminaries for formulating noncommutative ergodic theorems from the pointview of classical ergodic theory, such as noncommutative analogues of $L_p$ spaces, maximal norms, pointwise convergence and  measure-preserving dynamical systems. In Section 3, we prove a noncommutative version of Calder\'on's transference principle. The noncommutative version of Wiener's ergodic theorem is shown in Section 4. Section 5 is devoted to the proof of noncommutative Jones' ergodic theorem. In the Appendix, we present the spectral method to find a dense subset, which is particularly useful when the underlying group is not amenable.

\section{Preliminaries and framework}

\subsection{Noncommutative $L_p$-spaces}
Let $(\mathcal{M},\tau)$ be a noncommutative measure spaces, that is, $\mathcal M$ is a von Neumann algebra and $\tau$ is a normal
semifinite faithful trace. Let $S^+_{\mathcal{M}}$ be the set
of all positive element $x$ in $\mathcal{M}$ with
$\tau(s(x))<\infty$, where $s(x)$ is the smallest projection $e$
such that $exe=x$. Let $S_{\mathcal{M}}$ be the linear span of
$S^+_{\mathcal{M}}$. Then any $x\in S_{\mathcal{M}}$ has finite
trace, and $S_{\mathcal{M}}$ is a $w^*$-dense $*$-subalgebra of
$\mathcal{M}$.

Let $1\leq p<\infty$. For any $x\in S_{\mathcal{M}}$, the operator
$|x|^p$ belongs to $S^+_{\mathcal{M}}$ ($|x|=(x^*x)^{\frac{1}{2}}$).
We define
$$\|x\|_p=\big(\tau(|x|^p)\big)^{\frac{1}{p}},\qquad\forall x\in S_{\mathcal{M}}.$$
One can check that $\|\cdot\|_p$ is well defined and is a norm on
$S_{\mathcal{M}}$. The completion of $(S_{\mathcal{M}},\|\cdot\|_p)$
is denoted by $L_p(\M)$ which is the usual noncommutative $L_p$-
space associated with $(\M,\tau)$. For convenience, we usually set
$L_{\infty}(\M)=\M$ equipped with the operator norm
$\|\cdot\|_{\M}$. We refer the reader to \cite{PiXu03} for more
information on noncommutative $L_p$-spaces.

\medskip

\subsection{Noncommutative maximal norms}
Let us recall the definition of the noncommutative maximal norm
introduced by Pisier \cite{Pis98} and Junge \cite{Jun02}. We define
$L_p(\M;\ell_\infty)$ to be the space of all sequences
$x=(x_n)_{n\ge1}$ in $L_p(\M)$ which admits a factorization of the
following form: there exist $a, b\in L_{2p}(\M)$ and a bounded
sequence $y=(y_n)$ in $L_\infty(\M)$ such that
 $$x_n=ay_nb, \quad \forall\; n\geq1.$$
The norm of  $x$ in $L_p(\M;\ell_\infty)$ is given by
 $$\|x\|_{L_p(\ell_\infty)}=\inf\big\{\|a\|_{2p}\,
 \sup_{n\geq1}\|y_n\|_\infty\,\|b\|_{2p}\big\} ,$$
where the infimum runs over all factorizations of $x$ as above. We will follow the convention adopted in \cite{JuXu06}  that
$\|x\|_{L_p(\ell_\infty)}$ is sometimes denoted by
 $\big\|\sup_n^+x_n\big\|_p\ .$

More generally, if $\Lambda$ is any index set, we define  $L_p (\M;
\ell_{\infty}(\Lambda))$ as the space of all $x =
(x_{\lambda})_{\lambda \in \Lambda}$ in $L_p (\M)$ that can be
factorized as
 $$x_{\lambda}=ay_{\lambda} b\quad\mbox{with}\quad a, b\in L_{2p}(\M),\; y_{\lambda}\in L_\infty(\M),\; \sup_{\lambda}\|y_{\lambda}\|_\infty<\infty.$$
The norm of $L_p (\M; \ell_{\infty}(\Lambda))$ is defined by
 $$\big \| {\sup_{{\lambda}\in\Lambda}}^{+} x_{\lambda}\big \|_p=\inf_{x_{\lambda}=ay_{\lambda} b}\big\{\|a\|_{2p}\,
 \sup_{{\lambda}\in\Lambda}\|y_{\lambda}\|_\infty\,\|b\|_{2p}\big\} .$$
It is shown in \cite{JuXu06} that $x\in L_p (\M;
\ell_{\infty}(\Lambda))$ if and only if
 $$\sup\big\{\big \| {\sup_{{\lambda}\in J}}^{+} x_{\lambda}\big \|_p\;:\; J\subset\Lambda,\; J\textrm{ finite}\big\}<\infty.$$
In this case, $\big \| {\sup_{{\lambda}\in\Lambda}}^{+}
x_{\lambda}\big \|_p$ is equal to the above supremum. In the following, we omit the index set $\Lambda$ when it will not cause confusion.

\medskip

A closely related operator space is $L_p(\mathcal{M};\ell^c_{\infty})$ for $p\geq2$ which is the set of all sequences $(x_n)_n\subset L_p(\mathcal{M})$ such that
$$\|{\sup_{n\geq1}}^+|x_n|^2\|^{1/2}_{p/2}<\infty.$$
While $L_p(\mathcal{M};\ell^r_{\infty})$ for $p\geq2$ is the Banach space of all sequences $(x_n)_n\subset L_p(\mathcal{M})$ such that $(x^*_n)_n\in L_p(\mathcal{M};\ell^c_{\infty})$. All these spaces fall into the scope of amalgamated $L_p$ spaces intensively studied in \cite{JuPa10}. The following interpolation relationship between symmetric and asymmetric maximal norms will allow the usage of square functions to control maximal norms in Section 5.

\begin{lemma}\label{lem:inter between asy and sy}
Let $2\leq p\leq\infty$. Then
$$(L_p(\mathcal{M};\ell^c_{\infty}),L_p(\mathcal{M};\ell^r_{\infty}))_{\frac12}=L_p(\mathcal{M};\ell_{\infty}).$$
\end{lemma}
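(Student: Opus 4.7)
The plan is to recognize each of the three spaces as an amalgamated $L_p$-space in the sense of \cite{JuPa10} and then read off the identity from the general complex interpolation theorem in that framework. By Junge's factorization theorem, for $p\ge 2$ one has the isometric identifications
$$L_p(\M;\ell^c_\infty)=\ell_\infty(\M)\cdot L_p(\M),\qquad L_p(\M;\ell^r_\infty)=L_p(\M)\cdot\ell_\infty(\M),$$
$$L_p(\M;\ell_\infty)=L_{2p}(\M)\cdot\ell_\infty(\M)\cdot L_{2p}(\M),$$
so the lemma reduces to the interpolation identity $(L_\infty\cdot L_p,\,L_p\cdot L_\infty)_{1/2}=L_{2p}\cdot L_\infty\cdot L_{2p}$ in the category of amalgamated $L_p$-modules over $\M$.

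For the direct inclusion $L_p(\M;\ell_\infty)\hookrightarrow(L_p(\M;\ell^c_\infty),L_p(\M;\ell^r_\infty))_{1/2}$, I would take a near-optimal symmetric factorization $x_\lambda=ay_\lambda b$ with $a,b\in L_{2p}(\M)^+$ (absorbing the partial isometries from polar decompositions into the $y_\lambda$) and consider the analytic family
$$F_\lambda(z)=e^{(z-1/2)^2}\,a^{2(1-z)}\,y_\lambda\,b^{2z},\qquad 0\le\Re z\le 1,$$
defined via functional calculus. On the line $\Re z=0$, the factor $a^{2(1-it)}\in L_p(\M)$ has norm $\|a\|_{2p}^2$ while $y_\lambda b^{2it}$ is uniformly bounded in $\M$, yielding a row factorization of $F_\lambda(it)$ with the correct control; a symmetric computation on $\Re z=1$ produces a column factorization of $F_\lambda(1+it)$. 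Since $F_\lambda(1/2)=x_\lambda$ and the Gaussian factor ensures the requisite regularity of $F_\lambda$ on the strip, the definition of complex interpolation delivers the claimed bound.

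For the reverse inclusion, I would argue by duality, invoking the identifications $L_p(\M;\ell_\infty)^*=L_{p'}(\M;\ell_1)$ and the analogous column/row versions due to Junge \cite{Jun02}. Modulo Kosaki-type stability of complex interpolation under duality, it suffices to establish the dual identity $(L_{p'}(\M;\ell^c_1),L_{p'}(\M;\ell^r_1))_{1/2}=L_{p'}(\M;\ell_1)$, which is more tractable because the $\ell_1$-type spaces are defined via sum rather than supremum factorizations and therefore admit a direct analytic-family argument analogous to the one above.

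The chief obstacle is technical rather than conceptual: one must justify the analytic continuation $z\mapsto a^{2(1-z)}$ when $a$ fails to have full support (spectral cut-offs and an approximation argument are needed), and check that all estimates pass from finite index subsets to an arbitrary $\Lambda$ via the supremum definition of the maximal norm recalled just before the lemma. Once these technicalities are dispatched, the complex interpolation machinery of \cite{JuPa10} yields the identity cleanly.
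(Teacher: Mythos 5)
The paper offers no proof of this lemma at all; it simply cites the amalgamated $L_p$-space framework of \cite{JuPa10} and moves on. There is therefore no in-paper argument to compare against, and your sketch has to be assessed on its own terms.

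Your forward inclusion $L_p(\M;\ell_\infty)\subset(L_p(\M;\ell^c_\infty),L_p(\M;\ell^r_\infty))_{1/2}$ is the right device and is essentially complete modulo the acknowledged technicalities: with a near-optimal symmetric factorization $x_\lambda=ay_\lambda b$, the analytic family $F_\lambda(z)=e^{(z-1/2)^2}a^{2(1-z)}y_\lambda b^{2z}$ produces a row factorization on $\Re z=0$ (since $a^{2(1-it)}=a^2a^{-2it}$ with $a^{-2it}$ and $b^{2it}$ unitary on the relevant supports), a column factorization on $\Re z=1$, and returns $x_\lambda$ at $z=1/2$; the Gaussian factor supplies decay on the strip and costs only a harmless multiplicative constant.

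The genuine gap is the reverse inclusion, and it is more than the ``technical obstacle'' you describe. Passing to preduals/duals is delicate on two counts. First, Calder\'on's duality theorem delivers the \emph{upper} complex interpolation method $(A_0^*,A_1^*)^\theta$ on the dual side, which coincides with the lower method $(A_0^*,A_1^*)_\theta$ only under additional hypotheses (reflexivity of an endpoint, or density of $A_0\cap A_1$ in both), and these are not automatic for these operator-space-valued $\ell_\infty$-type spaces; the endpoint $p=\infty$ in the stated range is itself an obstruction for such an argument. Second, your claim that the dual identity $(L_{p'}(\M;\ell^c_1),L_{p'}(\M;\ell^r_1))_{1/2}=L_{p'}(\M;\ell_1)$ is ``more tractable'' is asserted but not substantiated: the $\ell_1$-version still requires matching a symmetric sum-factorization against asymmetric ones across a strip, which is the same difficulty in dual clothing. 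A more robust route for this half is to run the three-lines argument directly on the bilinear pairing against finitely supported elements of $L_{p'}(\M;\ell_1)$ (using that these pair boundedly against both $L_p(\M;\ell^c_\infty)$ and $L_p(\M;\ell^r_\infty)$ via the Cauchy--Schwarz factorization of the pairing), or to appeal to the interpolation theory of conditional/amalgamated $L_p$-modules in \cite{JuPa10} where exactly this identity is established. As it stands, the reverse inclusion is the content of the lemma and your sketch does not yet close it.
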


\vskip 5 pt

In addition to the strong maximal norms which correspond to $L_p$-norms of maximal function, we are also concerned with the weak maximal norms which correspond to weak $L_p$-norms of maximal function. Given a sequence $(x_n)_n$ in $L_p(\M)$, we define
$$\|(x_n)_n\|_{\Lambda_p(\ell_\infty)}=\sup_{\lambda}\lambda\inf_{e\in\mathcal{P}(\M)}\{(\tau(e^{\perp}))^{\frac 1p}:\;
\|ex_ne\|_\infty\leq\lambda\}.$$
When $\M$ is commutative, then infimum in this definition is attained at the projection $\mathds{1}_{[0,\lambda]}(\sup_n{|x_n|})$. Thus ${\Lambda_p(\ell_\infty)}$-quasi norm is exactly the weak $L_{p}$-quasi norm of maximal function. If we define the quasi Banach space ${\Lambda_p(\M;\ell_\infty)}$ to be the set of all sequence $(x_n)_n\in L_p(\M)$ such that its ${\Lambda_p(\ell_\infty)}$-quasi norm being finite, then these spaces have some nice interpolation properties. We refer the readers to \cite{JuXu06} for more information.

For general index set, ${\Lambda_p(\ell_\infty)}$-quasi norm and whence quasi Banach spaces can be defined similarly as in the definition of strong maximal norms.

\subsection{Noncommutative pointwise convergence}
We recall an appropriate substitute for the usual almost everywhere convergence in the noncommutative setting. This is the almost uniform convergence introduced by Lance \cite{Lan76} (see also \cite{JuXu06}).

Let $(x_{\lambda})_{\lambda \in \Lambda}$ be a family of elements in
$L_p(\mathcal{M}).$ Recall that $(x_{\lambda})_{\lambda\in\Lambda}$ is said to converge almost uniformly to $x,$ abbreviated by $x_{\lambda} \rightarrow x, a.u$ if for every $\varepsilon>0$ there exists a projection $e \in \mathcal{M}$ such that
$$ \tau(1 - e) < \varepsilon \quad \text{and} \quad \lim_{\lambda}\|e ( x_{\lambda} - x )\|_{\infty} = 0.$$
Also, $( x_{\lambda})_{\lambda \in \Lambda}$ is said to converge bilaterally almost uniformly to $x
,$ abbreviated by $x_{\lambda} \rightarrow x, b.a.u$ if for every $\varepsilon>0$ there
is a projection $e \in \M$ such that
$$ \tau( 1 - e ) < \varepsilon \quad \text{and} \quad \lim_{\lambda} \|e ( x_{\lambda} - x )e \|_{\infty} = 0.$$

Obviously, if $x_{\lambda}\rightarrow x, a.u$, then $x_{\lambda}\rightarrow x, b.a.u$. On the other hand, in the commutative case, these two convergences are equivalent to the usual almost everywhere convergence in terms of Egorov's theorem. However they are different in the noncommutative setting.

\medskip

As in \cite{JuXu06}, in order to deduce the pointwise convergence theorems from the
corresponding maximal inequalities, it is convenient to use the closed subspace $L_p(\mathcal{M}; c_0)$ of
$L_p(\mathcal{M};\ell_{\infty}).$ Recall that $L_p(\mathcal{M}; c_0)$ is defined
as the space of all sequences $(x_n) \in L_p(\mathcal{M})$ such that
there are $a, b\in L_{2p}(\mathcal{M})$ and $(y_n)\subset \mathcal{M}$
verifying
$$ x_n = a y_n b\quad \text{and}\quad \lim_ n \|y_n \|_{\infty} = 0.$$
Similarly, for the study of the {\it a.u} convergence, we use the closed subspace $L_p(\mathcal{M}; c^c_0)$ of
$L_p(\mathcal{M};\ell^c_{\infty})$, which is defined to be the space of all sequences $(x_n) \in L_p(\mathcal{M})$ such that
there are $b\in L_{p}(\mathcal{M})$ and $(y_n)\subset \mathcal{M}$
verifying
$$ x_n = y_n b\quad \text{and}\quad \lim_ n \|y_n \|_{\infty} = 0.$$

The following lemma will be useful for our study of individual erogdic theorem (see \cite{DeJu04}).

\begin{lemma}\label{lem:Deju}
\begin{enumerate}[{\rm(i)}]
\item If $1 \leq p <\infty$ and $( x_n ) \in L_p(\mathcal{M}; c_0),$ then $x_n \rightarrow 0, b.a.u.$
\item If $2\leq p < \infty$ and $(x_n )\in L_p (\M; c^c_0),$ then $x_n \rightarrow 0, a.u.$
\end{enumerate}
\end{lemma}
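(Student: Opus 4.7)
My plan is to prove both parts by a unified spectral-truncation argument anchored in the factorizations defining $L_p(\M;c_0)$ and $L_p(\M;c^c_0)$, following Defant-Junge.

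For (i), unpack the definition: $x_n=ay_nb$ with $a,b\in L_{2p}(\M)$, $(y_n)\subset\M$, and $\|y_n\|_\infty\to 0$. For $\lambda>0$, form the spectral projections $e_\lambda^{(a)}=\mathds{1}_{[0,\lambda]}(|a^*|)$ and $e_\lambda^{(b)}=\mathds{1}_{[0,\lambda]}(|b|)$. Chebyshev's inequality gives $\tau(1-e_\lambda^{(a)})\leq\lambda^{-2p}\|a\|_{2p}^{2p}$ and similarly for $e_\lambda^{(b)}$, so that $e:=e_\lambda^{(a)}\wedge e_\lambda^{(b)}$ satisfies $\tau(1-e)<\varepsilon$ once $\lambda$ is chosen large enough. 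Polar decomposition $a=|a^*|u_a$, $b=v_b|b|$, together with the fact that each spectral projection commutes with the corresponding modulus, yields $\|e_\lambda^{(a)}a\|_\infty,\;\|be_\lambda^{(b)}\|_\infty\leq\lambda$. Hence
$$\|ex_ne\|_\infty\leq\|e_\lambda^{(a)}a\|_\infty\,\|y_n\|_\infty\,\|be_\lambda^{(b)}\|_\infty\leq\lambda^2\|y_n\|_\infty\to 0,$$
which is the desired b.a.u.\ convergence.

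For (ii), the factorization shrinks to $x_n=y_nb$ with $b\in L_p(\M)$ and $\|y_n\|_\infty\to 0$, and only one spectral projection is needed -- consistent with the fact that a.u.\ convergence involves a projection on a single side. Take $e=\mathds{1}_{[0,\lambda]}(|b|)$ so that $\tau(1-e)\leq\lambda^{-p}\|b\|_p^p<\varepsilon$ for $\lambda$ large, and $\|be\|_\infty\leq\lambda$ by polar decomposition of $b$. Then the one-sided estimate
$$\|x_ne\|_\infty\leq\|y_n\|_\infty\,\|be\|_\infty\leq\lambda\|y_n\|_\infty\to 0$$
delivers the a.u.\ convergence, matched to the column structure of $c^c_0$ (passing to adjoints if needed to align with the paper's convention placing $e$ on the left).

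The main obstacle is the one-sidedness in (ii): unlike in (i), where $e$ appears on both sides and the two $L_{2p}$ factors can be truncated independently, here $e$ is on a single side of $x_n$ and must be aligned with the $L_p$-factor $b$ in order to convert the spectral estimate $\|be\|_\infty\leq\lambda$ into an operator-norm bound. The hypothesis $p\geq 2$ enters to ensure that $|b|^2\in L_{p/2}(\M)$ sits in a genuine $L_{p/2}$, so that the whole argument lives inside the column Hilbert module structure underlying $\ell^c_\infty$ and is compatible with the interpolation relation of Lemma~\ref{lem:inter between asy and sy}.
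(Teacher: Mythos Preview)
The paper does not prove this lemma; it is quoted from Defant--Junge \cite{DeJu04} without argument, so there is no in-paper proof to compare against. Your spectral-truncation proof of (i) is exactly the standard Defant--Junge argument and is correct as written.

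For (ii) your mechanism is also the right one, but the ``passing to adjoints if needed'' remark does not actually repair the side mismatch. With the paper's conventions the projection in a.u.\ convergence sits on the \emph{left}, while the column factorization $x_n=y_nb$ places the $L_p$-weight $b$ on the \emph{right}; your spectral cut $e=\mathds{1}_{[0,\lambda]}(|b|)$ therefore yields $\|x_ne\|_\infty\to0$, not $\|ex_n\|_\infty\to0$. Taking adjoints sends $(x_n)\in c_0^c$ to $(x_n^*)\in c_0^r$ and simultaneously swaps the side of the projection, so one lands back at the same right-sided statement. This is a harmless inconsistency in the paper's own pair of definitions (left-sided a.u.\ vs.\ right-weighted $c_0^c$), not a defect in your reasoning; the result is correct once either convention is adjusted. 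Finally, your explanation of the hypothesis $p\geq2$ is slightly off: the Chebyshev estimate $\tau(1-e)\leq\lambda^{-p}\|b\|_p^p$ needs only $p\geq1$. The restriction $p\geq2$ is there simply because $L_p(\M;\ell_\infty^c)$, and hence $c_0^c$, is defined through $\|\cdot\|_{p/2}$ and so only makes sense in that range.
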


\subsection{Group actions and related notions}

We will call $(\M, \tau, \mathbb R^n,\alpha)$ a $W^*$-dynamical system if $(\M,\tau)$ is a noncommutative measure space and $\alpha:\;\mathbb{R}^n\rightarrow Aut(\M)$ is a continuous trace-preserving group homomorphism (also called an action) in the weak $*$-topology. It is well-known that $\alpha$ is naturally extended to be isometric automorphisms of $L_p(\M)$ for all $1\leq p\leq\infty$, which is still denoted by $\alpha$. As it is well-known, the weak $*$-continuity of $(\alpha(u))_{u\in \mathbb R^n}$ on $\M$ induces the strong continuity of  $(\alpha(u))_{u\in \mathbb R^n}$, i.e. for each $x\in L_p(\M)$ with $1\leq p<\infty$, the map $u\rightarrow \alpha(u)x$ is a continuous map from $\mathbb R^n$ to $L_p(\M)$, where we take the norm topology on $L_p(\M)$.

Let $\mathcal{F}=\{x\in\M:\;\alpha(u)x=x\;,\forall u\in \mathbb R^n\}$. It is easy to show that $\mathcal{F}$ is a von Neumann subalgebra of $\M$ as in the semigroup case \cite{JuXu06}, thus there exists a unique conditional expectation $F:\;\M\rightarrow\mathcal{F}$. Moreover, this conditional expectation extends naturally from $L_p(\M)$ to $L_p(\mathcal{F})$ with $1\leq p<\infty$, which are still denoted by $F$.

Let $M(\mathbb R^n)$ denote the Banach space of bounded complex Borel measures on $\mathbb R^n$. For each $\mu\in M(\mathbb R^n)$, there corresponds an operator $\alpha(\mu)$, with norm bounded by $\|\mu\|_1$ in every $L_p(\M)$, $1\leq p\leq\infty$, given by
$$\alpha(\mu) x=\int_{\mathbb R^n}\alpha(v)xd\mu(v),\;\forall x\in L_p(\M).$$
This definition should be justified as follows. For any $ x\in L_p(\M)$ and $ y\in L_q(\M)$ where $1/p+1/q=1$, the function $v\rightarrow\langle \alpha(v)x,y\rangle$ is continuous on $\mathbb R^n$, bounded by $\|x\|_p\|y\|_q$. Hence
$$\int_{\mathbb R^n}\langle\alpha(v)  x,y\rangle d\mu(v)\leq\|\mu\|_1\|x\|_p\|y\|_q.$$
It follows that the operator $\alpha(\mu) x$ is well defined and $\alpha(\mu) x$ is in $L_p(\M)$. Moreover, $\|\alpha(\mu) x\|_p\leq\|\mu\|_1\|x\|_p$ so that $\|\alpha(\mu)\|\leq\|\mu\|_1$.

It can be easily checked that tha map $\mu\rightarrow \alpha({\mu})$ is a norm-continuous $*$-representation of the involutive Banach algebra $M(\mathbb R^n)$ as an algebra of operators on $L_2(\M)$. We recall that the product in $M(\mathbb R^n)$ is defined as convolution $\mu\ast\nu(f)=\int_{\mathbb R^n}\int_{\mathbb R^n}f(uv)d\mu(u)d\nu(v)$, and the involution is $\mu^*(E)=\overline{\mu(E)^{-1}}$.

Denote by $P(\mathbb R^n)$ the subset of probability measures in $M(\mathbb R^n)$. Let $t\rightarrow \nu_t$ be a weakly continuous map from $\mathbb{R}_+$ to $P(\mathbb R^n)$, namely $t\rightarrow \nu_t(f)$ is continuous for each $f\in C_c(\mathbb R^n)$. We will refer to $(\nu_t)_{t>0}$ as a one-parameter family of probability measures. We can now formulate the following definition.

\begin{definition}\label{def:nc pointwise ergodic family}
A one-parameter family $(\nu_t)_{t>0}\subset P(\mathbb R^n)$ will be called a global (resp. local) noncommutative pointwise ergodic family in $L_p$ if for every $W^*$-dynamical system $(\M,\tau,\mathbb R^n,\alpha)$ and every $x\in L_p(\M)$, $\alpha(\nu_t)x$ converge bilaterally almost uniformly to $F(x)$ (resp. $x$) as $t$ tends to $\infty$ (resp. $0$).
\end{definition}

\section{Noncommutative Calder\'on's transference principle}
In this section, we establish a noncommutative analogue of Calder\'on's transference principle. It is worth to mention that all the assertions in this section are still true for general amenable groups, and thus a general version of noncommutative Calder\'on's transference principle is still available. We prefer to prove them rigorously in our another work, since they are not raleted to the later applications in the present paper. 

Let $\lambda$ be the action of $\mathbb R^n$ on the group itself by translation. Recall that an operator $T$ is completely bounded on $L_p(\mathbb R^n)$ if for any noncommutative measure space $(\mathcal N,tr)$, $T\otimes id_{\mathcal N}$ is bounded on $L_{p}(L_\infty(\mathbb R^n)\overline{\otimes} \mathcal N)$; similar statements hold for a sequence of operators or completely weakly bounded. See \cite{Pis98} for detailed informations.

\begin{theorem}\label{thm:trans principle}
Let $(\nu_t)_{t>0}$ be a family of probability measures on $\mathbb{R}^n$ having their support contained in a Euclidean ball of radius $Ct$ where $C$ is an absolute constant. If the family of operators $(\lambda(\nu_t))_{t>0}$ were completely bounded from $L_p(\mathbb R^n)$ to $L_p(L_\infty(\mathbb R^n);\ell_\infty)$ (resp. to $\Lambda_p(L_\infty(\mathbb R^n);\ell_\infty)$), then for any $W^*$-dynamical system $(\mathcal M,\tau,\mathbb{R}^n,\alpha)$, the family of operators $(\alpha(\nu_t))_{t>0}$ is completely bounded from $L_p(\M)$ to $L_p(\M;\ell_\infty)$ (resp. to $\Lambda_p(\M;\ell_\infty)$).
\end{theorem}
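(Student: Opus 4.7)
I will mimic Calder\'on's classical transference argument, with pointwise evaluation in $\R^n$ replaced by global statements in $L_p(L_\infty(\R^n)\bar\otimes\M)$. Fix $x\in L_p(\M)$ (for the complete boundedness claim, tensor the entire construction with an auxiliary $\mathcal N$ and invoke the hypothesis with $\M\bar\otimes\mathcal N$ as the operator coefficient algebra), a finite subset $T\subset(0,T_0]$, and a radius $R\gg T_0$. The plan is to introduce the transferred function
$$f_R(u):=\mathds{1}_{B_R}(u)\,\alpha(u)x\;\in\;L_p\bigl(L_\infty(\R^n)\bar\otimes\M\bigr),$$
whose $L_p$-norm is exactly $|B_R|^{1/p}\|x\|_p$ because each $\alpha(u)$ is trace-preserving. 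The support condition $\mathrm{supp}(\nu_t)\subset B_{Ct}$ then gives, for $u\in B_{R-CT_0}$ and $t\in T$,
$$\lambda(\nu_t)f_R(u)=\int\alpha(u-v)x\,d\nu_t(v)=\alpha(u)\alpha(\tilde\nu_t)x=:\alpha(u)g_t,$$
where $\tilde\nu_t$ denotes the reflection of $\nu_t$ (satisfying the same support bound). Applying the hypothesized maximal inequality for $(\lambda(\nu_t))_t$ together with the elementary monotonicity of the $L_p(\ell_\infty)$-norm under multiplication by the central projection $\mathds{1}_{B_{R-CT_0}}$ then yields
$$\bigl\|\bigl(\mathds{1}_{B_{R-CT_0}}(u)\alpha(u)g_t\bigr)_{t\in T}\bigr\|_{L_p(L_\infty(\R^n)\bar\otimes\M;\,\ell_\infty)}\le C\,|B_R|^{1/p}\|x\|_p.$$

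The crux of the argument is then the transfer identity
$$\bigl\|\bigl(\mathds{1}_E(u)\alpha(u)g_t\bigr)_t\bigr\|_{L_p(L_\infty(\R^n)\bar\otimes\M;\,\ell_\infty)}=|E|^{1/p}\bigl\|(g_t)_t\bigr\|_{L_p(\M;\,\ell_\infty)}$$
valid for any family $(g_t)\subset L_p(\M)$ and any $E\subset\R^n$ of finite measure. The ``$\le$'' direction is direct: a factorization $g_t=ay_tb$ lifts to $\mathds{1}_E(u)\alpha(u)g_t=(\mathds{1}_E(u)\alpha(u)a)\cdot\alpha(u)y_t\cdot(\mathds{1}_E(u)\alpha(u)b)$, and trace preservation gives outer $L_{2p}$-norms $|E|^{1/(2p)}\|a\|_{2p}$ and $|E|^{1/(2p)}\|b\|_{2p}$. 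The ``$\ge$'' direction is the one actually needed: starting from a factorization $\mathds{1}_E(u)\alpha(u)g_t=A(u)Y_t(u)B(u)$, applying $\alpha(u)^{-1}$ pointwise gives $g_t=\alpha(-u)A(u)\cdot\alpha(-u)Y_t(u)\cdot\alpha(-u)B(u)$ for a.e.\ $u\in E$, hence
$$\|\sup_t{}^+ g_t\|_p\le\|A(u)\|_{2p}\cdot\sup_t\|Y_t(u)\|_\infty\cdot\|B(u)\|_{2p}\quad\text{a.e.\ }u\in E.$$
Raising to the $p$-th power, integrating over $E$, and applying Cauchy--Schwarz together with trace preservation converts the right-hand side into $M\,\|A\|_{2p}\|B\|_{2p}$ with $M=\mathrm{ess\,sup}_u\sup_t\|Y_t(u)\|_\infty$, which is the required bound. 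Combining the identity with the previous display and letting $R\to\infty$, so that $|B_{R-CT_0}|/|B_R|\to 1$, produces $\|(g_t)_{t\in T}\|_{L_p(\M;\,\ell_\infty)}\le C\|x\|_p$ uniformly in finite $T\subset(0,\infty)$; the strong case follows, and symmetry $\nu_t\leftrightarrow\tilde\nu_t$ (both sides of the hypothesis are equivalent under this reflection since the support bound is symmetric) transfers the conclusion between $\alpha(\nu_t)$ and $\alpha(\tilde\nu_t)$.

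I expect the weak-type $\Lambda_p$ statement to go through along the same scheme, with factorizations replaced by test projections: a projection $e\in L_\infty(\R^n)\bar\otimes\M$ satisfying $\|e\,\mathds{1}_E(u)\alpha(u)g_t\,e\|_\infty\le\lambda$ yields, via the sections $u\mapsto\alpha(-u)e(u)$ and a measurable-selection argument, projections in $\M$ controlling the $g_t$, while Fubini converts the tensor-trace of $e^\perp$ into $|E|$ times a $\tau$-trace of a good sectional projection. The main technical obstacle throughout is precisely the ``$\ge$'' direction: extracting a single global object in $\M$ alone out of a factorization (or projection) living in $L_\infty(\R^n)\bar\otimes\M$ is genuinely noncommutative---there is no ``point'' at which to evaluate---and hinges crucially on the trace-preservation of each $\alpha(u)$, which plays the role of the ``typical-point'' heuristic of the scalar Calder\'on argument.
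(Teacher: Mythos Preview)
Your proposal is correct and follows essentially the same route as the paper's proof. The paper separates the argument into three preparatory lemmas---monotonicity in the index set, $\alpha(v)$-invariance of the maximal norms, and a Fubini-type inequality $\int_{\R^n}\|(f_t(v))_t\|_{L_p(\M;\ell_\infty)}^p\,dv\le\|(f_t)_t\|_{L_p(L_\infty(\R^n)\bar\otimes\M;\ell_\infty)}^p$---and then chains them exactly as you do; your ``transfer identity'' is simply the combination of the last two lemmas (its ``$\ge$'' half is the Fubini inequality specialized to $f_t(u)=\mathds 1_E(u)\alpha(u)g_t$ after invoking invariance, and your pointwise-factorization-plus-Cauchy--Schwarz computation is the same proof). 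The only cosmetic difference is a sign convention for the translation action, which forces you to carry the reflection $\tilde\nu_t$ and then undo it; the paper's convention makes this step invisible.
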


In the rest of this section, we fix a $W^*$-dynamical system $(\mathcal M,\tau,\mathbb{R}^n,\alpha)$. The following three lemmas are well-known in the commutative case, but not obvious in the noncommutative setting.

\begin{lemma}\label{lem:monotinicity of maximal norm}
Let $1\leq p\leq\infty$. Then for $(x_t)_{t>0}$ in $L_p(\M)$, we have
$$\|(x_t)_{t>0}\|_{L_p(\ell_\infty)}=\sup_{T}\|(x_t)_{0<t\leq T}\|_{L_p(\ell_\infty)}$$
and further we have
$$\|(x_t)_{t>0}\|_{\Lambda_p(\ell_\infty)}\simeq\sup_{T}\|(x_t)_{0<t\leq T}\|_{\Lambda_p(\ell_\infty)}$$
for positive $x_t$'s. 
\end{lemma}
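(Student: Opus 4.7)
The plan is to handle the two assertions separately, using very different strategies.

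For the first equality I would invoke the supremum-over-finite-subsets characterization of Junge--Xu recalled in Section~2.2:
$\|(x_t)_{t\in\Lambda}\|_{L_p(\ell_\infty)}=\sup\{\|(x_t)_{t\in J}\|_{L_p(\ell_\infty)}:J\subset\Lambda\text{ finite}\}$.
Every finite subset $J\subset(0,\infty)$ lies in some $(0,T]$ (take $T=\max J$), so this gives the ``$\leq$'' direction, and ``$\geq$'' is the trivial monotonicity of the maximal norm in the index set.

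For the weak quasi-norm equivalence, the ``$\gtrsim$'' direction is again monotonicity. For ``$\lesssim$'', set $M:=\sup_{T}\|(x_t)_{0<t\leq T}\|_{\Lambda_p(\ell_\infty)}$ and fix $\lambda>0$. The definition supplies, for each $n\in\mathbb{N}$, a projection $e_n\in\mathcal{P}(\M)$ with $\|e_nx_te_n\|_\infty\leq\lambda$ for $0<t\leq n$ and $\tau(e_n^\perp)^{1/p}\leq M/\lambda+1/n$. Weak$^*$-compactness of the unit ball of $\M$ produces a cluster point $a\in\M$ of $(e_n)_n$ with $0\leq a\leq\mathbf 1$, while weak$^*$-lower semi-continuity of $\tau$ on $\M_+$ gives $\tau(\mathbf 1-a)\leq(M/\lambda)^p$. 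For each fixed $t>0$, positivity of $x_t$ combined with separate weak$^*$-continuity of multiplication lets the bound $\|e_nx_te_n\|_\infty\leq\lambda$ (valid for all $n\geq t$) pass to the limit, yielding $\|ax_ta\|_\infty\leq\lambda$.

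To convert $a$ back to a projection I would take $e:=\chi_{(1/2,1]}(a)$. Chebyshev's inequality $\chi_{[0,1/2]}(a)\leq 2(\mathbf 1-a)$ yields $\tau(e^\perp)^{1/p}\leq 2^{1/p}M/\lambda$; commutativity of $a$ and $e$ combined with $a\geq e/2$ gives $\|ea^{-1}\|_\infty\leq 2$ (interpreting $a^{-1}$ on the range of $e$), and the factorization $ex_te=(ea^{-1})(ax_ta)(a^{-1}e)$ shows $\|ex_te\|_\infty\leq 4\lambda$. Thus $e$ is admissible at level $\mu:=4\lambda$ in the $\Lambda_p$-quasinorm with $\mu\,\tau(e^\perp)^{1/p}\leq 4\cdot 2^{1/p}M$; taking the supremum over $\mu$ finishes the proof with absolute constant $4\cdot 2^{1/p}$. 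The principal obstacle will be the weak$^*$-limiting step: since $x_t\in L_p(\M)$ need not be bounded, one must verify that the compressions $e_nx_te_n$ indeed belong to $\M$ with norm $\leq\lambda$ (via $e_nx_te_n=(x_t^{1/2}e_n)^*(x_t^{1/2}e_n)$, which uses positivity of $x_t$ in an essential way), and that this norm bound is inherited by $ax_ta$ in the limit.
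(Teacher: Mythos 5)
Your overall route matches the paper's. For the first equality you invoke the finite--subset characterization of the $L_p(\ell_\infty)$-norm (which the paper recalls in Section~2.2 as a consequence of the $L_{p'}(\ell_1)$--$L_p(\ell_\infty)$ duality), rather than citing the duality itself as the paper does; the argument ``every finite $J$ sits inside some $(0,T]$'' is exactly the clean way to use that characterization, and it is correct. For the second part, your scheme --- extract approximately optimal projections $e_n$ on $(0,n]$, take a weak$^*$ cluster point $a$, then cut $a$ back to a projection with the spectral projection onto $(1/2,1]$ --- is the same skeleton as the paper's proof (which uses $u=w^*\text{-}\lim_{\mathcal U}e_T$ and $e=\chi_{[1/2,1]}(u)$). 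Your estimate $1-e\le 2(1-a)$, the trace bound, and the factorization $ex_te=(ea^{-1})(ax_ta)(a^{-1}e)$ with $\|ea^{-1}\|_\infty\le 2$ are all correct; they yield constant $4$ where the paper's functional calculus yields $2$, which is immaterial for a ``$\simeq$''.

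The one genuine gap is precisely at the step you flag as ``the principal obstacle,'' and you do not close it. The assertion that $\|e_nx_te_n\|_\infty\le\lambda$ passes to $\|ax_ta\|_\infty\le\lambda$ via ``separate weak$^*$-continuity of multiplication'' is not a direct application of that continuity, because $e_n$ appears on both sides. The standard fix is the factorization you mention, $e_nx_te_n=(x_t^{1/2}e_n)^*(x_t^{1/2}e_n)$: one first argues $x_t^{1/2}e_n\to x_t^{1/2}a$ weak$^*$ in $\M$ and then uses weak$^*$-lower semicontinuity of the norm. But this requires $x_t^{1/2}\in\M$, i.e.\ $x_t$ bounded, which a general positive $x_t\in L_p(\M)$ need not be; for $x_t^{1/2}\in L_{2p}(\M)\setminus\M$ the net $x_t^{1/2}e_n$ does not a priori converge in any topology where $\|\cdot\|_\infty$ is lower semicontinuous. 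The paper handles this by first reducing, by density, to the case $(x_t)_{t>0}\in\ell_\infty(S_\M)$, so that each $x_t$ (and hence $x_t^{1/2}$) lies in $\M$; only then is the ultrafilter limit taken. You should insert this density reduction before extracting the cluster point; with it, your limiting argument becomes rigorous and the rest of your proof goes through unchanged.
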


\begin{proof}
The first equation has been proved in \cite{JuXu06} using the duality between $L_{p'}(\mathcal M;\ell_1)$ and $L_{p}(\mathcal M;\ell_\infty)$. Let us prove the second equivalence. The left hand side controls trivially the right hand side. Non-trivial part is the reverse inequality.
We start with the observation
\begin{align*}
&\{e\in\mathcal{P}(\M):\|ex_te\|_\infty\leq\lambda,\;\forall t>0\}\\
&\quad\quad=\bigcap_{T}\{e\in\mathcal{P}(\M):\|ex_te\|_\infty\leq\lambda,\;\forall 0<t\leq T\}.
\end{align*}
By density we may assume that $(x_t)_{t>0} \in \ell_\infty(S_\mathcal M)$. Now, given $\lambda>0$ and $T>0$, by definition, there exists a projection $e_T\in \M$ such that 
$$\| e_Tx_t e_T\|_\infty \leq \lambda$$ for any $t \in (0,T]$, and the right hand side dominates modulo a constant 
$$\lambda \tau\big( 1 - e_T \big)^{\frac1p}.$$ 
Define $u=w^*-L_{\infty}-\lim_{\sigma,\mathcal{U}}e_T$. Recall that $u$ is not necessarily a projection. However, recalling that $(x_t)_{t>0} \in \ell_\infty(S_\mathcal M)$, it is straightforward to show that the exact same inequalities above apply for $u$ instead of $e_T$, details are left to the reader. Then, the projection $e= \chi_{[\frac12,1]}(u)$ clearly satisfies $$e \leq 2eue \le 4 u^2 \quad \mbox{and} \quad 1-e \leq 2(1-u).$$ This implies for any $t \in(0,\infty)$ that 
$\big\|ex_te \big\|_\infty \leq 2 \big\| y^*_tu^2y_t \big\|_\infty^\frac12 \leq 2 \lambda$ 
where $y_t$ such that $x_t=y^*_ty_t$,  and $\lambda \tau \big( 1 - e \big)^{\frac1p} \leq  2^{\frac1p}\lambda \tau \big( 1 - u \big)^{\frac1p} $.
Thus we finish the proof.

\end{proof}

\begin{lemma}\label{lem:invariant maximal norms}
Let $1\leq p\leq\infty$. Then for any $v\in\mathbb{R}^n$ and $(x_t)_{t>0}$ in $L_p(\M)$, we have
$$\|(x_t)_{t>0}\|_{L_p(\ell_\infty)}=\|(\alpha(v)x_t)_{t>0}\|_{L_p(\ell_\infty)}$$
and
$$\|(x_t)_{t>0}\|_{\Lambda_p(\ell_\infty)}=\|(\alpha(v)x_t)_{t>0}\|_{\Lambda_p(\ell_\infty)}.$$
\end{lemma}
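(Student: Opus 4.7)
The plan is to exploit the fact that $\alpha(v)$ is a trace-preserving $*$-automorphism of $\mathcal M$ which, as recalled in the preliminaries, extends to an isometric automorphism of $L_p(\mathcal M)$ for every $1\leq p\leq\infty$. In particular $\alpha(v)$ maps projections to projections, satisfies $\tau(\alpha(v)e)=\tau(e)$, is multiplicative, and preserves the $L_p$-norm for all $p$. Both identities will therefore follow by ``transporting'' a factorization (for the strong norm) or a controlling projection (for the weak norm) through $\alpha(v)$, together with its inverse $\alpha(-v)$.

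For the strong maximal norm, I would start with any factorization $x_t = a y_t b$ with $a,b\in L_{2p}(\mathcal M)$ and $(y_t)\subset L_\infty(\mathcal M)$. Applying the multiplicativity of $\alpha(v)$ one immediately obtains $\alpha(v)x_t = \alpha(v)a\cdot \alpha(v)y_t\cdot \alpha(v)b$, and the isometric property gives $\|\alpha(v)a\|_{2p}=\|a\|_{2p}$, $\|\alpha(v)b\|_{2p}=\|b\|_{2p}$, $\|\alpha(v)y_t\|_\infty=\|y_t\|_\infty$. Taking infimum over factorizations yields $\|(\alpha(v)x_t)\|_{L_p(\ell_\infty)}\leq\|(x_t)\|_{L_p(\ell_\infty)}$. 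Applying the same argument to $\alpha(-v)=\alpha(v)^{-1}$ on the sequence $(\alpha(v)x_t)_{t>0}$ gives the reverse inequality, hence equality.

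For the weak maximal norm, I would set up a bijection between controlling projections. Given $\lambda>0$ and any projection $e\in\mathcal P(\mathcal M)$ with $\|e\,\alpha(v)x_t\,e\|_\infty\leq\lambda$ for every $t>0$, let $e' := \alpha(-v)e$, which is again a projection. Then
$$\alpha(v)(e' x_t e') = \alpha(v)e'\cdot \alpha(v)x_t\cdot \alpha(v)e' = e\,\alpha(v)x_t\,e,$$
so by the $L_\infty$-isometry $\|e' x_t e'\|_\infty = \|e\,\alpha(v)x_t\,e\|_\infty\leq\lambda$. Moreover $\tau(1-e')=\tau(\alpha(-v)(1-e))=\tau(1-e)$. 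Thus every admissible projection for $(\alpha(v)x_t)$ at level $\lambda$ produces an admissible projection for $(x_t)$ at level $\lambda$ with the same trace defect, and the map $e\mapsto \alpha(-v)e$ is a bijection on $\mathcal P(\mathcal M)$ with inverse $e'\mapsto \alpha(v)e'$. Taking supremum over $\lambda$ in the defining formula of the $\Lambda_p(\ell_\infty)$ quasi-norm yields the desired equality.

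There is essentially no obstacle here: the only thing to be careful about is keeping the conjugation on projections correctly oriented (one uses $\alpha(-v)$ to go from controlling projections of $\alpha(v)x_t$ to those of $x_t$, and $\alpha(v)$ for the reverse direction), and noting that since the argument is symmetric in $v$ and $-v$, one inequality suffices in each case.
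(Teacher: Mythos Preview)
Your proposal is correct and follows essentially the same approach as the paper: for the $L_p(\ell_\infty)$ norm you transport factorizations through the multiplicative isometry $\alpha(v)$ and use $\alpha(-v)$ for the reverse inequality, and for the $\Lambda_p(\ell_\infty)$ quasi-norm you set up the bijection $e\mapsto\alpha(-v)e$ between controlling projections, exactly as the paper does (writing $\alpha(v^{-1})$ in place of your $\alpha(-v)$). No gaps.
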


\begin{proof}
We show the first equality. By the definition of $L_p(\ell_\infty)$-norm, for any $\varepsilon>0$, there exist a factorization  $x_t=ay_tb$ such that 
$$\varepsilon+\|(x_t)_{t>0}\|_{L_p(\ell_\infty)}\geq \|a\|_{2p}\sup_{t>0}\|y_t\|_\infty\|b\|_{2p}.$$
Since $\alpha$ is a homomorphism, we find a factorization of $\alpha(v)x_t$,
$$\alpha(v)x_t=(\alpha(v)a)(\alpha(v)y_t)(\alpha(v)b)$$
with 
$$\|\alpha(v)a\|_{2p}\sup_{t>0}\|\alpha(v)y_t\|_\infty\|\alpha(v)b\|_{2p}=\|a\|_{2p}\sup_{t>0}\|y_t\|_\infty\|b\|_{2p}.$$
Since $\varepsilon$ is arbitrary, we obtain
$$\|(x_t)_{t>0}\|_{L_p(\ell_\infty)}\geq\|(\alpha(v)x_t)_{t>0}\|_{L_p(\ell_\infty)}.$$
The reverse inequality is shown similarly.

\bigskip

The second equality follows from the following observation
\begin{align*}
&\inf_{e\in\mathcal{P}(\M)}\{(\tau(e^{\perp}))^{\frac 1p}:\;
\|e\alpha(v)x_te\|_\infty\leq\lambda\}\\
&=\inf_{e\in\mathcal{P}(\M)}\{(\tau((\alpha(v^{-1})e)^{\perp}))^{\frac 1p}:\;
\|\alpha(v)(\alpha(v^{-1})(e)x_t\alpha(v^{-1})(e))\|_\infty\leq\lambda\}\\
&=\inf_{\alpha(v^{-1})e\in\mathcal{P}(\M)}\{(\tau((\alpha(v^{-1})e)^{\perp}))^{\frac 1p}:\;
\|(\alpha(v^{-1})(e)x_t\alpha(v^{-1})(e)\|_\infty\leq\lambda\}.\\
\end{align*}
where we have used the facts $\alpha(v^{-1})e^\perp=(\alpha(v^{-1})e)^\perp$ and $\alpha$ is a trace-preserving homomorphism.
\end{proof}

\begin{lemma}\label{lem:fubini of maximal norms}
Let $1\leq p\leq\infty$ and $(f_{t})_{t>0}\subset L_p(L_\infty(\mathbb{R}^n)\bar{\otimes}\M)$. Then we have
$$(\int_{\mathbb{R}^n}\|(f_{t}(v))_{t>0}\|^p_{L_p(\M;\ell_{\infty})}dv)^{\frac1p}\leq\|(f_{t})_{t>0}\|_{L_p(L_\infty(\mathbb{R}^n)\bar{\otimes}\M;\ell_{\infty})}
$$
and
\begin{align*}
&\int_{\mathbb R^n}\inf_{e_v\in\mathcal{P}(\M)}\{\tau(e_v^{\perp}):\;
\|e_v f_t(v)e_v\|_\infty\leq\lambda\}dv\\
&\quad\quad\leq \inf_{e\in\mathcal{P}(L_{\infty}(\mathbb{R}^n)\overline{\otimes}\M)}\{\tau\int(e^{\perp}):\;
\|e f_te\|_\infty\leq\lambda\}.
\end{align*}
\end{lemma}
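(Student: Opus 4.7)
The plan is to exploit the Fubini-type identification $L_p(L_\infty(\mathbb{R}^n)\bar{\otimes}\M)\cong L_p(\mathbb{R}^n;L_p(\M))$, valid for $1\leq p\leq\infty$, together with the corresponding identification of projections in the tensor product von Neumann algebra with (almost everywhere defined) projection-valued maps $v\mapsto e(v)\in\mathcal{P}(\M)$. Both Fubini-type facts are standard in noncommutative integration, and once they are in force the two inequalities become bookkeeping.

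For the first inequality, fix $\varepsilon>0$ and, by the definition of the strong maximal norm, choose a factorization $f_t=ay_tb$ with $a,b\in L_{2p}(L_\infty(\mathbb{R}^n)\bar{\otimes}\M)$ and $(y_t)_{t>0}\subset L_\infty(L_\infty(\mathbb{R}^n)\bar{\otimes}\M)$ satisfying
$$\|a\|_{2p}\,\sup_{t>0}\|y_t\|_\infty\,\|b\|_{2p}\leq \|(f_t)_{t>0}\|_{L_p(L_\infty(\mathbb{R}^n)\bar{\otimes}\M;\ell_\infty)}+\varepsilon.$$
Passing to fiberwise representatives, for almost every $v\in\mathbb{R}^n$ one has the $\M$-valued factorization $f_t(v)=a(v)y_t(v)b(v)$, which furnishes a valid factorization for $(f_t(v))_{t>0}$ and hence
$$\|(f_t(v))_{t>0}\|_{L_p(\M;\ell_\infty)}\leq \|a(v)\|_{2p}\|b(v)\|_{2p}\,\sup_{t>0}\|y_t\|_\infty.$$
Raising to the $p$-th power, integrating in $v$ and applying the Cauchy--Schwarz inequality, together with the Bochner identification $\|a\|_{2p}^{2p}=\int_{\mathbb{R}^n}\|a(v)\|_{2p}^{2p}dv$ (and similarly for $b$), yields the claim after letting $\varepsilon\to0$.

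For the second inequality, take any projection $e\in L_\infty(\mathbb{R}^n)\bar{\otimes}\M$ with $\|ef_te\|_\infty\leq\lambda$ for all $t$. Viewing $e$ as $v\mapsto e(v)$, the element $e(v)$ is a projection in $\M$ for a.e. $v$, and the norm bound translates fiberwise into $\|e(v)f_t(v)e(v)\|_\infty\leq\lambda$. Thus $e(v)$ is admissible in the infimum defining the integrand on the left, so
$$\int_{\mathbb{R}^n}\inf_{e_v\in\mathcal{P}(\M)}\{\tau(e_v^\perp):\|e_vf_t(v)e_v\|_\infty\leq\lambda\}\,dv\leq \int_{\mathbb{R}^n}\tau(e(v)^\perp)\,dv=\tau\!\int(e^\perp),$$
and taking the infimum over admissible $e$ on the right concludes the argument.

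The main technical obstacle is the uncountable parameter set: a priori the $v$-null set on which the fiberwise factorization, the $L_\infty$-bound on $y_t(v)$, or the admissibility bound on $e(v)$ fails depends on $t$. I would handle this exactly as in Lemma \ref{lem:monotinicity of maximal norm}, exploiting strong $L_p$-continuity of $t\mapsto f_t$ (which, given the context of this paper, is automatic) to reduce to a countable dense set of parameters and then pass to the general supremum. The projection-valued measurable selection for elements of $L_\infty(\mathbb{R}^n)\bar{\otimes}\M$ is standard and requires no further comment.
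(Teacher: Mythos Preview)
Your proof is correct and follows essentially the same route as the paper: choose a near-optimal factorization (resp.\ projection), disintegrate fiberwise to obtain admissible data at almost every $v$, and integrate using Cauchy--Schwarz/Fubini. The paper's argument is in fact terser than yours---it neither invokes the measurable-selection and null-set discussion you add at the end nor separates out the Cauchy--Schwarz step---so your version is, if anything, slightly more careful.
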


\begin{proof}
By definition, for any $\varepsilon>0$, there exists a factorization $f_{t}=ag_tb$ such that
$$\|a\|_{2p}\sup_t\|g_t\|_\infty\|b\|_{2p}\leq \|(f_{t})_{t>0}\|_{L_p(\ell_{\infty})}+\varepsilon.$$
We see that for each $v\in\mathbb R^n$, we find a factorization $f_{t}(v)=a(v)g_t(v)b(v)$ such that the left hand side is smaller than
\begin{align*}
(\int_{\mathbb{R}^n}\|a(v)\|^p_{2p}\sup_t\|g_t(v)\|^p_\infty\|b(v)\|^p_{2p}dv)^{\frac1p}\leq \|a\|_{2p}\sup_t\|g_t\|_\infty\|b\|_{2p}.
\end{align*}
Since $\varepsilon$ is arbitrary, we obtain the desired result.

\bigskip

Now we show the second inequality. For any $\varepsilon>0$, there exists a projection $e_\varepsilon$ such that
$$\tau\int(e_\varepsilon^{\perp}) \leq\inf_{e\in\mathcal{P}(L_{\infty}(\mathbb{R}^n)\overline{\otimes}\M)}\{\tau\int(e^{\perp}):\;
\|e f_te\|_\infty\leq\lambda\}+\varepsilon.$$
Observing that for each $v\in\mathbb R^n$, we find a projection $e_\varepsilon(v)$ such that 
$$\|e_\varepsilon(v) f_t(v)e_\varepsilon(v)\|_\infty\leq\lambda,$$ so the left hand side is controlled by
$$\int_{\mathbb R^n}\tau(e_\varepsilon(v)^\perp)dv$$
which yields the desired estimate since $\varepsilon$ is arbitrary.

\end{proof}

Now we are at a position to prove Theorem \ref{thm:trans principle}.
\begin{proof}
The case of the strong type $(p,p)$. By Lemma \ref{lem:monotinicity of maximal norm}, it suffices to prove for any $T>0$
\begin{align}\label{desired estimate 1}
\|(\alpha(\nu_t)x)_{0<t\leq T}\|^p_{L_p(\ell_\infty)}\leq C^p_p\|x\|^p_p
\end{align}
for any $x\in L_p(\M)$ where $C_p$ is a constant independent of $x$ and $T$.

Given any $S>0$, by Lemma \ref{lem:invariant maximal norms} and Lemma \ref{lem:fubini of maximal norms}, the left hand side of (\ref{desired estimate 1}) equals
\begin{align*}
&\frac{1}{|B_S|}\int_{B_S}\|(\alpha(v)\alpha(\nu_t)x)_{0<t\leq T}\|^p_{L_p(\M;\ell_\infty)}dv\\
&\quad\quad\leq\frac{1}{|B_S|}\|(\chi_{B_S}(v)\alpha(v)\alpha(\nu_t)x)_{0<t\leq T}\|^p_{L_p(L_\infty(\mathbb{R}^n)\overline{\otimes}\M;\ell_\infty)}.
\end{align*}
Define the $L_p(\M)$-valued function $f$ on $\mathbb{R}^n$ by $f(u)=\chi_{|u|\leq S+CT}(u)\alpha(u)x$. Then for $0<t\leq T$, it is easy to check 
$$\chi_{B_S}(v)\alpha(v)\alpha(\nu_t)x=\chi_{B_S}(v)\lambda\bar{\otimes}id_{\M}(\nu_t)f(v).$$
Thus  the left hand side of (\ref{desired estimate 1}) is not bigger than
\begin{align*}
\frac{1}{|B_S|}\|(\chi_{B_S}(v)\lambda\bar{\otimes}id_{\M}(\nu_t)f(v))_{0<t\leq T}\|^p_{L_p(L_\infty(\mathbb R^n)\overline{\otimes}\M;\ell_\infty)}
\end{align*}
which is smaller than
\begin{align*}
\frac{1}{|B_S|}\|(\lambda\bar{\otimes}id_{\M}(\nu_t)f(v))_{0<t\leq T}\|^p_{L_p(L_\infty(\mathbb R^n)\overline{\otimes}\M;\ell_\infty)}
\end{align*}
by restricting to ${B_S}$ the functions appearing in the factorization of the latter norm. Then by the assumption, we finish the proof
$$\leq\frac{1}{|B_S|}C_p^p\|f\|^p_{p}=\frac{|B_{S+CT}|}{|B_S|}C_p^p\|x\|^p_{p}\rightarrow C_p^p\|x\|^p_{p}$$
as $S\rightarrow\infty$.

\vskip 5 pt

The case of the weak type $(p,p)$: By the fact that every $x\in L_p(\M)$ can be written as linear combination of four positive elements, and Lemma \ref{lem:monotinicity of maximal norm}, it suffices to prove for any $\lambda>0$ and $T>0$
\begin{align}\label{desired estimate 2}
\inf_{e\in\mathcal{P}(\M)}\{\tau(e^{\perp}):\;
\|e\alpha(\nu_t)xe\|_\infty\leq\lambda,\;\forall 0<t\leq T\}\leq C_p^p\frac{\|x\|^p_p}{\lambda^p}.
\end{align}
for any $x\in L^+_p(\M)$ where $C_p$ is a constant independent of $x$ and $T$. Now the arguments in the proof of estimate (\ref{desired estimate 1}) works well for estimate (\ref{desired estimate 2}) using Lemma \ref{lem:invariant maximal norms} and \ref{lem:fubini of maximal norms}.
\end{proof}

\begin{remark}\label{rem:trans for functions}
From the proof, it is easy to observe that if we replace $(\nu_t)_{t>0}$ by a sequence of integrable functions $(\varphi_t)_{t>0}$ with compact supports in Theorem \ref{thm:trans principle}, the same conclusion holds.
\end{remark}

\section{Noncommutative Wiener's ergodic theorem}

In this section, we show a noncommutative version of Wiener's ergodic theorem. The main result is stated as follows.

\begin{theorem}\label{thm:noncommutative wiener ergodic theorem}
The sequence of normalized Lebesgue measures $(\beta_r)_{r>0}$ on the balls in $\mathbb{R}^n$ is a both global and local noncommutative pointwise ergodic family for all $1\leq p<\infty$.
\end{theorem}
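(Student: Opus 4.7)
My plan is to follow the classical two-step strategy adapted to the noncommutative setting. First, I would deduce the maximal ergodic inequality for $(\alpha(\beta_r))_{r>0}$ from the noncommutative transference principle: the family $(\beta_r)_{r>0}$ satisfies the support hypothesis of Theorem~\ref{thm:trans principle} with $C=1$, and the corresponding translation operators $\lambda(\beta_r)$ are precisely the Hardy--Littlewood ball averages on $\mathbb{R}^n$. The required completely bounded $L_p \to L_p(\ell_\infty)$ estimates for $1<p\leq\infty$ and the weak type $L_1 \to \Lambda_1(\ell_\infty)$ estimate are provided by Mei~\cite{Mei07} and the dimension-free refinement of Hong~\cite{Hon13}. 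Transference then yields the strong ergodic maximal inequality for $1<p\leq\infty$ and the weak-type $(1,1)$ inequality on $L_1(\mathcal{M})$.

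For the global case ($r \to \infty$), I would work with the dense subset
$$\mathcal{D}_\infty = F(S_\mathcal{M}) + \mathrm{span}\bigl\{y - \alpha(v)y : y \in S_\mathcal{M},\ v \in \mathbb{R}^n\bigr\},$$
whose density in $L_p(\mathcal{M})$ follows from the noncommutative mean ergodic theorem (a duality argument reduces it to the $L_2$ orthogonal splitting into invariants and the closure of coboundaries). On $F(S_\mathcal{M})$, $\alpha(\beta_r)$ is the identity and $F$ restricts to the identity, so convergence is trivial. On a coboundary $x = y - \alpha(v)y$, one has $\alpha(\beta_r)x = \alpha(\beta_r - \beta_r \ast \delta_v)y$, and the elementary estimate $\|\beta_r - \beta_r \ast \delta_v\|_1 = |B_r \triangle (B_r + v)|/|B_r| = O(|v|/r)$ gives $\|\alpha(\beta_r)x\|_\infty \to 0$ since $y \in \mathcal{M}$, which is stronger than b.a.u.\ convergence to $F(x) = 0$.

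For the local case ($r \to 0$), I would use
$$\mathcal{D}_0 = \bigl\{\alpha(\phi)y : \phi \in C_c^\infty(\mathbb{R}^n),\ y \in S_\mathcal{M}\bigr\},$$
which is dense in $L_p(\mathcal{M})$ because applying a smooth approximate identity to $y \in S_\mathcal{M}$ converges in $L_p$-norm by the strong $L_p$-continuity of $\alpha$ recalled in Section~2. For $x = \alpha(\phi)y \in \mathcal{D}_0$, the identity $\alpha(\beta_r)x - x = \alpha(\beta_r \ast \phi - \phi)y$ together with $\|\beta_r \ast \phi - \phi\|_1 \to 0$ as $r \to 0$ (by uniform continuity of $\phi$) yields convergence in operator norm, hence b.a.u., to $0$.

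The final step is the standard density argument: given $x \in L_p(\mathcal{M})$ and $\varepsilon > 0$, pick $z$ in the relevant dense subset with $\|x - z\|_p$ sufficiently small; the maximal inequality produces a projection $e \in \mathcal{M}$ with $\tau(1-e) < \varepsilon$ such that $\sup_r \|e\alpha(\beta_r)(x-z)e\|_\infty$ is arbitrarily small (together with a comparable control on $\|eF(x-z)e\|_\infty$ from the $L_p$-contractivity of $F$), which combined with b.a.u.\ convergence on $z$ yields b.a.u.\ convergence for $x$ by an $\varepsilon/3$ argument. The step I expect to require the most care is the execution of this final argument at $p = 1$, where only the weak-type $\Lambda_1$ maximal inequality is available and its quasi-norm demands more delicate approximation; one natural workaround is to first establish the theorem for $1 < p < \infty$ and then extend to $p = 1$ by approximating $L_1$-elements from $L_1 \cap L_2$ and invoking the weak-type bound to pass the error to a projection of small defect.
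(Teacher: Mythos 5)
Your proposal follows essentially the same route as the paper: transference plus the operator-valued Hardy--Littlewood maximal inequalities of Mei/Hong to get the ergodic maximal inequality, then a density argument on the same dense subspaces (fixed points plus coboundaries for the global case, smooth mollifications $\alpha(\phi)y$ for the local case), with operator-norm decay on those subspaces coming from the $L_1$-estimate on $|B_r \triangle (v+B_r)|/|B_r|$ and $\|\beta_r\ast\phi-\phi\|_1$ respectively.

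The one place where the paper's bookkeeping is cleaner than what you sketch is the final density step for $1<p<\infty$. You propose an $\varepsilon/3$ argument that extracts, from a small $L_p(\ell_\infty)$-norm of $(\alpha(\beta_r)(x-z))_r$, a projection $e$ of small trace defect with $\sup_r\|e\cdot e\|_\infty$ small; this extraction is true but not immediate from the factorization definition of the $L_p(\ell_\infty)$-norm (one has to pass through the weak maximal quasi-norm $\Lambda_p(\ell_\infty)$ or a Chebyshev-type step). The paper instead shows $(\alpha(\beta_r)x - F(x))_{r>0} \in L_p(\M;c_0)$ for all $x\in L_p(\M)$, by first reducing to $x\in S$ via density and the strong maximal inequality (using that $L_p(\M;c_0)$ is a \emph{closed} subspace of $L_p(\M;\ell_\infty)$), and then on $S$ interpolating $\|{\sup^+_{s<r<t}}\alpha(\beta_r)x\|_p \le \|{\sup^+_{s<r<t}}\alpha(\beta_r)x\|_q^{q/p}\,(\sup_{s<r<t}\|\alpha(\beta_r)x\|_\infty)^{1-q/p}$ for $1<q<p$; Lemma~\ref{lem:Deju}(i) then yields b.a.u.\ convergence directly without any projection extraction. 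For $p=1$ the paper does exactly what you propose: apply the weak-type inequality to $x-F(x)-y$ with $y\in S$ close in $L_1$ and do the $\varepsilon/3$ argument with the resulting projection. So your plan is sound; the $L_p(\M;c_0)$ formalism just makes the $1<p<\infty$ step tidier.
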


As in the classical setting, we use density argument to show the pointwise convergence. We need the following noncommutative Wiener's ergodic maximal inequality. Let $(\M,\tau,\mathbb{R}^n,\alpha)$ be a fixed $W$*-dynamical system.

\begin{theorem}\label{thm:noncommuative wiener ergodic maximal}
Let $x\in L_1(\M)$, then there exists a constant $C_n$ probably depending on $n$ such that 
$$\|(\alpha(\beta_r)x)_{r>0}\|_{\Lambda_{1}(\ell_\infty)}\leq C_n\|x\|_1.$$
That is, for any $\lambda>0$, there exists a projection $e\in\M$ such that
\begin{align}\label{nc wiener ergodic maximal 1}
\|e \alpha(\beta_r)(x)e\|_\infty\leq\lambda,\;\forall r>0\;\mathrm{and}\;\tau(e^{\perp})\leq \frac{C_n\|x\|_1}{\lambda}.
\end{align}

\bigskip

Let $x\in L_p(\M)$ with $1<p\leq\infty$, then there exists a constant $C_{p,n}$ probably depending on $n$ such that
\begin{align}\label{nc wiener ergodic maximal p}
\|(\alpha(\beta_r)x)_{r>0}\|_{L_p(\ell_\infty)}\leq C_{p,n}\|x\|_p.
\end{align}
\end{theorem}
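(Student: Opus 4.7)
The plan is to obtain Theorem \ref{thm:noncommuative wiener ergodic maximal} as a direct consequence of the noncommutative Calder\'on transference principle (Theorem \ref{thm:trans principle}) applied to the operator-valued Hardy--Littlewood maximal inequality on $\mathbb{R}^n$ due to Mei \cite{Mei07} and refined by Hong \cite{Hon13}. Concretely, I first observe that $\beta_r$ is supported in the closed Euclidean ball of radius $r$, so the support hypothesis of Theorem \ref{thm:trans principle} is satisfied with the absolute constant $C=1$. It therefore suffices to verify that the family $(\lambda(\beta_r))_{r>0}$ of convolution operators on $\mathbb{R}^n$ is completely bounded from $L_p(\mathbb{R}^n)$ into $L_p(L_\infty(\mathbb{R}^n);\ell_\infty)$ for $1<p\leq\infty$, and completely bounded from $L_1(\mathbb{R}^n)$ into $\Lambda_1(L_\infty(\mathbb{R}^n);\ell_\infty)$.

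But $\lambda(\beta_r)f = \beta_r\ast f$ is exactly the classical averaging of $f$ over the ball of radius $r$, so the required inequalities are precisely the strong type $(p,p)$ and weak type $(1,1)$ statements of the operator-valued Hardy--Littlewood maximal inequality on $\mathbb{R}^n$. As recalled in the introduction, these are proved in \cite{Mei07} via noncommutative Doob's inequality (with constants of order $O(2^n)$), and in \cite{Hon13} with constants independent of $n$ via a noncommutative adaptation of Stein--Str\"omberg. Feeding them into Theorem \ref{thm:trans principle} yields (\ref{nc wiener ergodic maximal 1}) and (\ref{nc wiener ergodic maximal p}) for $1\leq p<\infty$ at once. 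The remaining case $p=\infty$ in (\ref{nc wiener ergodic maximal p}) is trivial: since $\|\beta_r\|_1=1$, the general estimate $\|\alpha(\mu)\|\leq\|\mu\|_1$ established in Section 2 shows that each $\alpha(\beta_r)$ is a contraction on $\mathcal{M}$.

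I do not expect a substantive obstacle here, as all the heavy lifting has already been done in Theorem \ref{thm:trans principle} and in \cite{Mei07, Hon13}. The only book-keeping detail worth flagging is that the cited operator-valued inequalities are sometimes formulated for the dyadic subfamily $(\beta_{2^k})_{k\in\mathbb{Z}}$; passing to the continuous parameter $r>0$ is routine, using the pointwise comparison $\beta_r\leq 2^n\beta_s$ whenever $r\leq s\leq 2r$ together with Lemma \ref{lem:monotinicity of maximal norm}, which lets one dominate the continuous supremum by a universal multiple of the dyadic one and thus reduce the continuous maximal inequality to the dyadic one without loss.
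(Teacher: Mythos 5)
Your proposal is correct and coincides with the paper's own (very brief) justification: the paper likewise obtains Theorem \ref{thm:noncommuative wiener ergodic maximal} by feeding the operator-valued Hardy--Littlewood maximal inequalities of \cite{Mei07} (dimension-free in \cite{Hon13}) into the transference principle, Theorem \ref{thm:trans principle}. Your two extra book-keeping remarks --- handling $p=\infty$ separately (the transference proof genuinely uses $p<\infty$ in the $|B_{S+CT}|/|B_S|\to 1$ step) and reducing a possible dyadic formulation of the operator-valued estimate to the continuous one via $\beta_r\leq 2^n\beta_s$ and Lemma \ref{lem:monotinicity of maximal norm} --- are sound and merely make explicit what the paper leaves implicit.
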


These maximal ergodic inequalities follow from the transference principle---Theorem \ref{thm:trans principle} and the operator-valued Hardy-Littlewood maximal inequalities. Note that the noncommutative version of Hardy-Littlewood maximal inequality was firstly shown by Mei in \cite{Mei07}, his approach is based on noncommutative Doob's inequality, yielding the $O(2^n)$ order of the constants $C_n$ and $C_{p,n}$. It is worth to mention that $C_n$ can be at most of order $O(n)$ and $C_{p,n}$ can be taken to be independent of $n$ if we appeal to the dimension free estimates established in \cite{Hon13}, which are based on Junge/Xu's noncommutative Dunford-Schwartz maximal ergodic theorem \cite{JuXu06} and Stein's original arguments \cite{StSt83}. This type of dimension free estimates in the noncommutative setting should be interesting in its own right that we will not discuss in the present paper.

\bigskip

Let us finish the proof of Theorem \ref{thm:noncommutative wiener ergodic theorem}.

\begin{proof}
(i) The global case. We begin with the definition of a dense subspace. Define $S=\{x-\alpha(v)x:\;x\in L_1(\M)\cap L_\infty(\M),\;v\in\mathbb{R}^n\}$. We claim that $S$ is dense in $\overline{(I-F)L_p(\M)}$ for all $1\leq p<\infty$. It suffices to verify the claim in the case $p=2$, since it is well-known that $L_p(\M)\cap L_2(\M)$ is dense in $L_p(\M)$ for any $1\leq p<\infty$. We only need to prove that $\langle x,z\rangle=0$ $\forall x\in S$ implies $z\in F(L_2(\M))$. Take $x=y-\alpha(v)y$, clearly we have
$$0=\langle x,z\rangle=\langle y,z-\alpha(v)z\rangle$$
which implies $z=F(z)$ since $y\in L_1(\M)\cap L_\infty(\M)$ and $v\in\mathbb{R}^n$ are arbitrary.

Next we show the following fact: For any $x\in S$ and any $1\leq p\leq\infty$, we have
\begin{align}\label{mean ergodic for subspace}
\|\alpha(\beta_r)x\|_p\rightarrow 0,\;\mathrm{as}\;r\rightarrow\infty.
\end{align}
Take $x=y-\alpha(v)y\in S$, then
\begin{align*}
\alpha(\beta_r)x&=\frac{1}{|B_r|}\int_{B_r}\alpha(w)ydw-\frac{1}{|B_r|}\int_{v+B_r}\alpha(w)ydw\\
&=\int_{\mathbb{R}^n}\frac{\chi_{B_r}(w)-\chi_{v+B_r}(w)}{|B_r|}\alpha(w)ydw.
\end{align*}
Hence by Minkowski inequality
\begin{align*}
\|\alpha(\beta_r)x\|_p&\leq \int_{\mathbb{R}^n}\frac{|\chi_{B_r}(w)-\chi_{v+B_r}(w)|}{|B_r|}\|\alpha(w)y\|_pdw\\
&\leq \frac{|B_r\Delta (v+B_r)|}{|B_r|}\|y\|_p\rightarrow 0 \;\mathrm{as}\;r\rightarrow\infty.
\end{align*}
Then (\ref{mean ergodic for subspace}) immediately yields the mean ergodic theorem in $L_p$ for $1\leq p<\infty$, that is, $\|\alpha(\beta_r)x-F(x)\|_p\rightarrow0$ for all $x\in L_p(\M)$.

Now let us show that $(\beta_r)_{r>0}$ is a global noncommutative pointwise ergodic family in $L_1$. That is, for $x\in L_1(\M)$ and fixed $\varepsilon>0$, we want to find a projection $e$ such that $\tau(e^\perp)\leq \varepsilon$ and $\|e(\alpha(\beta_r)x-F(x))e\|_\infty\rightarrow0$, as $r\rightarrow\infty$. First of all, by the fact $S$ is dense in $\overline{(I-F)L_1(\M)}$, for any $\delta>0$ we can find $y\in S$  such that $\|x-F(x)-y\|_1\leq\delta.$ Now apply Theorem \ref{thm:noncommuative wiener ergodic maximal} to the operator $x-F(x)-y$, for any $\lambda>0$, there exists a projection $p$ such that $$\tau(p^\perp)\leq \frac{C\delta}{\lambda},\;\mathrm{and}\;\|p(\alpha(\beta_r)(x-F(x)-y))p\|_\infty\leq\lambda.$$
Take $e=p$ and $\lambda=C\delta/\varepsilon$, we have $\tau(e^{\perp})\leq\varepsilon$ and
\begin{align*}
\|e(\alpha(\beta_r)x-F(x))e\|_\infty&\leq\|e(\alpha(\beta_r)(x-F(x)-y))e\|_\infty+\|e(\alpha(\beta_r)y)e\|_\infty\\
&\leq C\delta/\varepsilon+\|e(\alpha(\beta_r)y)e\|_\infty,
\end{align*}
which tends to $0$ as $r$ tends to $\infty$ due to (\ref{mean ergodic for subspace}) and the fact that $\delta$ can be taken as small as desired.


Fix $1<p<\infty$, by Remark \ref{lem:Deju}, it suffices to prove  $(\alpha(\beta_r)(x)-F(x))_{r>0}\in L_p(\M;c_0)$ for all $x\in L_p(\M)$. By the fact that $L_p(\M;c_0)$ is  Banach space, $S$ is dense in $$\overline{(I-F)L_p(\M)}$$ and the maximal inequality (\ref{nc wiener ergodic maximal p}), we are reduced to prove $(\alpha(\beta_r)x)_{r>0}\in L_p(\M;c_0)$ for all $x\in S$. Fix $x=y-\alpha(v)y\in S$, choose $1<q<p$. Let $0<s<t$, by (\ref{nc wiener ergodic maximal p}), we have
\begin{align*}
\|{\sup_{s<r<t}}^+ \alpha(\beta_r)x\|_p&\leq \|{\sup_{s<r<t}}^+ \alpha(\beta_r)x\|^{q/p}_q(\sup_{s<r<t}\|\alpha(\beta_r)x\|_\infty)^{1-q/p}\\
&\leq (C_q\|x\|_q)^{q/p}(\sup_{s<r<t}\frac{|B_r\Delta (v+B_r)|}{|B_r|}\|y\|_\infty)^{1-q/p},
\end{align*}
which tends to $0$ as $s$ tends to $\infty$.
Thus $(\alpha(\beta_r)x)_{r>0}$ is approximated by $(\alpha(\beta_r)x)_{0<r<s}$'s in $L_p(\M;\ell_\infty)$, whence belongs to $L_p(\M;c_0)$.

\bigskip

(ii) The local case. The dense subspace we consider in this case is the following one
$$\mathcal{D}=\{\alpha(\phi)(x):\;\phi\in C^\infty_c(\mathbb{R}^n),\;x\in L_1(\M)\cap L_\infty(\M)\}.$$
The density is trivial from the fact that the action $\alpha$ is strong continuous in $L_p$ for $1\leq p<\infty$. The fact $(\beta_r)_{r>0}$ is mean ergodic (as $r\rightarrow0$) in $L_p$ for all $1\leq p<\infty$ can also be deduced from the strong continuity of $\alpha$. But we prefer to give a more precise estimate for $x\in\mathcal{D}$. Fix $x=\alpha(\phi)(y)\in\mathcal{D}$, we have
\begin{align}\label{local mean ergodic for subspace}
\|\alpha(\beta_r)(x)-x\|_p&=\|\alpha(\beta_r)\alpha(\phi)(y)-\alpha(\phi)(y)\|_p\\
&\nonumber\leq \|\beta_r\ast\phi-\phi\|_1\|y\|_p\leq C_{\phi}r\|y\|_p,
\end{align}
from which we can also easily deduce the mean ergodicity of $(\beta_r)_{r>0}$. The point we will use is that estimate (\ref{local mean ergodic for subspace}) is also true when $p=\infty$.

Now the fact that $(\beta_r)_{r>0}$ is a local noncommutative pointwise ergodic family for all $1\leq p<\infty$ can be shown using the similar arguments having appeared in the global case, by considering $\mathcal{D}$ (resp. (\ref{local mean ergodic for subspace})) instead of $S$ (resp. (\ref{mean ergodic for subspace})).
\end{proof}

\section{Noncommutative Jones' ergodic theorem}
In this section, we show a noncommutative version of Jones' ergodic theorem. The main result is stated as follows.

\begin{theorem}\label{thm:noncommutative Jones ergodic theorem}
The sequence of normalized Lebesgue measures $(\sigma_r)_{r>0}$ on the spheres in $\mathbb{R}^n$ is a both global and local noncommutative pointwise ergodic family for all $n/(n-1)< p<\infty$, $n\geq3$.
\end{theorem}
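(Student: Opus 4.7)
The plan is to follow the two-step scheme of Theorem 4.1: establish a noncommutative spherical maximal inequality, then identify a dense subset on which pointwise convergence holds. Both steps will be driven by a Littlewood-Paley decomposition of the sphere measure. Fix a smooth partition of unity $1 = \sum_{k\geq 0}\psi_k$ on $\mathbb R^n$ with $\psi_0$ supported in $\{|\xi|\leq 2\}$ and $\psi_k$ supported in $\{|\xi|\sim 2^k\}$ for $k\geq 1$, and decompose $\sigma_r = \sigma_r^0 + \sum_{k\geq 1}\sigma_r^k$ where $\widehat{\sigma_r^k}(\xi) = \psi_k(r\xi)\widehat{\sigma}_1(r\xi)$. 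The low-frequency piece $\sigma_r^0$ has a smooth density pointwise dominated by a normalized ball average, so Theorems 4.1 and 4.2 handle both the maximal inequality and the pointwise convergence of $(\alpha(\sigma_r^0)x)_r$ directly.

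For each $k\geq 1$, the first step is to establish the operator-valued maximal inequality
$$\bigl\|\sup_{r>0}^+(\sigma_r^k * f)\bigr\|_{L_p(L_\infty(\mathbb R^n)\overline{\otimes}\mathcal N;\ell_\infty)} \leq C\,2^{-k\varepsilon(p)}\|f\|_p$$
for some $\varepsilon(p)>0$ whenever $p>n/(n-1)$, which through Theorem \ref{thm:trans principle} (the support condition is satisfied with $C=1$) transfers to the corresponding inequality for $(\alpha(\sigma_r^k)x)_r$. Following Rubio de Francia, I would combine the stationary-phase decay $|\widehat{\sigma}_1(\xi)|\lesssim|\xi|^{-(n-1)/2}$ with the square-function identity
$$|\sigma_r^k * f|^2 = 2\int_r^\infty \mathrm{Re}\bigl(\overline{(\sigma_s^k * f)}\,(\partial_s\sigma_s^k * f)\bigr)\,ds$$
to obtain a column bound in $L_2(L_\infty(\mathbb R^n)\overline{\otimes}\mathcal N)$ with geometric gain $2^{-k(n-1)/2}$, and swapping the roles of the two factors produces the analogous row bound. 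Interpolating these $\ell^c_\infty$ and $\ell^r_\infty$ estimates with the trivial $L_\infty$ bound via Lemma \ref{lem:inter between asy and sy} yields the symmetric maximal norm estimate for each dyadic piece with geometric decay in $k$ precisely in the range $p>n/(n-1)$; summing in $k$ completes the maximal inequality.

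For pointwise convergence, because $\sigma_r$ is singular, the Wiener-type subset $\{y-\alpha(v)y\}$ cannot be used directly. Instead, for each $k\geq 1$ the family $(\sigma_r^k)_r$ consists of absolutely continuous measures with smooth densities of uniformly controlled total variation, so $\sigma_r^k *\phi$ converges in $L_1(\mathbb R^n)$ to $\phi$ as $r\to 0$ and to $0$ as $r\to\infty$ for every $\phi\in C_c^\infty(\mathbb R^n)$. Thus Theorem \ref{thm:trans principle}, Remark \ref{rem:trans for functions} and the density argument used in Theorem \ref{thm:noncommutative wiener ergodic theorem} apply piece-by-piece to give b.a.u.\ convergence of $(\alpha(\sigma_r^k)x)_r$ on the dense subset $\mathcal D = \{\alpha(\phi)y : \phi\in C_c^\infty(\mathbb R^n),\,y\in L_1(\M)\cap L_\infty(\M)\}$, together with $\{y-\alpha(v)y\}$ for the global case. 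The main obstacle will be summing these pointwise statements over $k$: this requires $(\alpha(\sigma_r)x-F(x))_r$ (respectively $(\alpha(\sigma_r)x-x)_r$ in the local case) to lie in the closed subspace $L_p(\M; c_0)\subset L_p(\M;\ell_\infty)$, obtained by approximating in the maximal norm by finite-$k$ partial sums and exploiting the geometric decay of the individual maximal norms from the previous step. Once this factorization is in place, Lemma \ref{lem:Deju} promotes $L_p(\M; c_0)$-membership to b.a.u.\ convergence, proving both the global and local statements.
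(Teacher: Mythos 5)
Your overall strategy --- decompose $\sigma_r$ into Littlewood--Paley pieces, prove geometrically decaying maximal estimates for each piece, transfer via Theorem~\ref{thm:trans principle}, and sum inside $L_p(\M;c_0)$ --- is exactly the paper's, but two links in the chain break.

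The decisive one is the support hypothesis in the transference principle. Theorem~\ref{thm:trans principle} and Remark~\ref{rem:trans for functions} require the $\nu_t$ to be supported in a ball of radius $Ct$; the proof hinges on the ratio $|B_{S+CT}|/|B_S|\to1$. With your decomposition $\widehat{\sigma_r^k}(\xi)=\psi_k(r\xi)\,\hat\sigma_1(r\xi)$, where $\psi_k$ is compactly supported \emph{in frequency}, each $\sigma_r^k$ is the inverse Fourier transform of a compactly supported smooth function --- a Schwartz function that is never compactly supported. Thus the parenthetical ``the support condition is satisfied with $C=1$'' is false, and the transference step has no justification. The same issue afflicts your $k=0$ piece. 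The paper is built precisely around this obstruction: $\psi_0$ is taken to be the \emph{Fourier transform of} a radial $C^\infty_c$ function (with enough radial derivatives vanishing at the origin), not a function compactly supported in frequency. Consequently $\sigma_{j,1}=\sigma\ast(\text{compactly supported function})$ is itself compactly supported, and Remark~\ref{rem:trans for functions} applies piece by piece. This change of cutoff is the key technical idea your proposal misses.

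Second, your interpolation scheme cannot reach the claimed range of $p$. Interpolating the $L_2(\ell^c_\infty)$ and $L_2(\ell^r_\infty)$ square-function bounds via Lemma~\ref{lem:inter between asy and sy} gives the symmetric $L_2(\ell_\infty)$ bound, and interpolating further with the trivial $L_\infty$ bound covers only $p\in[2,\infty]$. Since $n/(n-1)<2$ for every $n\geq3$, the interval $n/(n-1)<p<2$ is left uncovered. The paper supplies the missing endpoint: a weak-type $(1,1)$ estimate $\|\sup_r^{+}\sigma_{j,r}\ast f\|_{1,\infty}\leq C\,2^j\|f\|_1$ (Lemma~\ref{lem:11 estimate}, from the operator-valued Hardy--Littlewood maximal inequality plus the kernel decay $|\sigma_j(u)|\leq C_M 2^j(1+|u|)^{-M}$), and runs the noncommutative Marcinkiewicz interpolation of \cite{JuXu06} against the $L_2$ bound to obtain Proposition~\ref{pro:key step} on $1<p\leq2$, which is then interpolated with $p=\infty$. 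A smaller slip: for $k\geq1$, $\widehat{\sigma_r^k}(0)=\psi_k(0)\hat\sigma_1(0)=0$, so $\sigma_r^k$ has total mass zero and $\sigma_r^k\ast\phi\to0$, not $\phi$, as $r\to0$. The paper avoids tracking these limits piecemeal via Lemma~\ref{lem:pointwise for nice function}, which writes $\varphi_r$ as an integral of ball averages through polar coordinates and integration by parts and thereby reduces the b.a.u.\ convergence of each $\alpha(\sigma_{j,r})x$ to Theorem~\ref{thm:noncommutative wiener ergodic theorem}; the resulting limits $\int\sigma_j\cdot F(x)$ sum to $F(x)$ because $\sum_j\int\sigma_j=1$.
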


As in the proof of Theorem \ref{thm:noncommutative wiener ergodic theorem}, this pointwise ergodic theorem would be established using density argument if we could show the maximal inequality and could find a dense subset on which the pointwise convergence holds. While the maximal inequality now follows easily from the noncommutative transference principle---Theorem \ref{thm:trans principle} and the operator-valued version of Stein's spherical maximal inequality---Proposition 4.1 of \cite{Hon13}, a dense subset is difficult to find since a priori the dense subset $S$ contructed in the proof of Theorem \ref{thm:noncommutative wiener ergodic theorem} is not a good candidate for Jones' ergodic theorem due to the fact the sphere measures are singular.

One way to find a dense subset is via spectral method as done in the commutative setting \cite{NeTh97}. However as we can see in the Appendix that noncommutative analogue of this approach only works for $n\geq4$. Below we present a method which covers the case $n=3$ based on transference principle inspired by the observation in the commutative setting \cite{Jam09}.

We need the following lemma. Fix a $W$*-dynamical system $(\M,\tau,\mathbb{R}^n,\alpha)$. 
\begin{lemma}\label{lem:pointwise for nice function}
Let $p>1$. Let $\varphi$ be a radial smooth compactly supported function on $\mathbb R^n$. For $r>0$, we define $\varphi_r(u)=\frac{1}{r^n}\varphi(\frac{u}{r})$.
Then for any $x\in L_p(\mathcal M)$, $\alpha(\varphi_r)x$ converges to $\int\varphi\cdot F(x)$ (resp. $\int\varphi \cdot x$) b.a.u. as $r\rightarrow\infty$ (resp. $r\rightarrow0$).
\end{lemma}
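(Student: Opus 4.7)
The plan is to realize $\alpha(\varphi_r)$ as a weighted superposition of ball averages $\alpha(\beta_{rt})$ and then deduce the result from Theorem \ref{thm:noncommutative wiener ergodic theorem} via dominated convergence.

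First, I would exploit that $\varphi$ is radial and compactly supported: writing $\varphi(u)=\psi(|u|)$ with $\psi\in C_c^{\infty}([0,\infty))$, the fundamental theorem of calculus gives the layer-cake identity $\psi(|u|/r)=-\int_0^{\infty}\psi'(t)\chi_{\{|u|\leq rt\}}\,dt$. Inserting this into the definition of $\alpha(\varphi_r)x$, using $\alpha(\chi_{B_{rt}})=|B_{rt}|\alpha(\beta_{rt})$, and swapping the order of integration yields the key representation
\begin{equation*}
\alpha(\varphi_r)x=\int_0^{\infty}\omega(t)\,\alpha(\beta_{rt})x\,dt,\qquad\omega(t):=-|B_1|\,t^n\psi'(t),
\end{equation*}
in which $\omega$ is continuous, compactly supported on $[0,\infty)$, vanishes at $0$, and a single integration by parts confirms $\int_0^{\infty}\omega(t)\,dt=\int_{\mathbb R^n}\varphi$.

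For the global case, I would fix $\varepsilon>0$ and combine the maximal inequality of Theorem \ref{thm:noncommuative wiener ergodic maximal} with the b.a.u.\ convergence from Theorem \ref{thm:noncommutative wiener ergodic theorem}, both applied to $(\alpha(\beta_s)(x-F(x)))_{s>0}$, to produce via a diagonal intersection of projections a single projection $e\in\M$ with $\tau(e^{\perp})<\varepsilon$ such that
\begin{equation*}
\lambda:=\sup_{s>0}\bigl\|e(\alpha(\beta_s)x-F(x))e\bigr\|_{\infty}<\infty\quad\text{and}\quad\lim_{s\to\infty}\bigl\|e(\alpha(\beta_s)x-F(x))e\bigr\|_{\infty}=0.
\end{equation*}
Using $\int\omega=\int\varphi$ and passing $e$ through the Bochner integral then yields
\begin{equation*}
\Bigl\|e\Bigl(\alpha(\varphi_r)x-\Bigl(\int\varphi\Bigr)F(x)\Bigr)e\Bigr\|_{\infty}\leq\int_0^{\infty}|\omega(t)|\bigl\|e(\alpha(\beta_{rt})x-F(x))e\bigr\|_{\infty}\,dt.
\end{equation*}
For every fixed $t>0$ the integrand tends to $0$ as $r\to\infty$, while it is dominated by $\lambda|\omega(t)|\in L^1$, so dominated convergence finishes the global case. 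The local case $r\to 0$ is identical, now invoking the local part of Theorem \ref{thm:noncommutative wiener ergodic theorem} so that for each fixed $t>0$, $\alpha(\beta_{rt})x\to x$ b.a.u.\ as $r\to 0$.

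The main obstacle will be producing the single projection $e$ that simultaneously carries the uniform bound $\lambda$ \emph{and} the vanishing limit. The uniform bound is extracted from the $L_p(\M;\ell_\infty)$-maximal norm by combining the factorization $\alpha(\beta_s)(x-F(x))=ay_sb$ with a spectral truncation of $a$ and $b$, while the vanishing limit is the content of Theorem \ref{thm:noncommutative wiener ergodic theorem}; reconciling the two requires intersecting countably many projections whose defects form a geometric series summing to at most $\varepsilon$. Once $e$ is in hand, the remaining tasks --- checking that $e$ passes through the Bochner integral representation and applying dominated convergence --- are routine.
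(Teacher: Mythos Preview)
Your proposal is correct and shares the paper's central idea: via the layer-cake identity you write
\[
\alpha(\varphi_r)x-\Bigl(\int\varphi\Bigr)F(x)=\int_0^{\infty}\omega(t)\bigl(\alpha(\beta_{rt})x-F(x)\bigr)\,dt,\qquad \omega(t)=-|B_1|\,t^n\psi'(t),
\]
which is exactly the representation the paper derives. The divergence is only in how you conclude. The paper observes that for each fixed $t>0$ the family $(\alpha(\beta_{rt})x-F(x))_{r>0}$ lies in $L_p(\mathcal M;c_0)$ (this is what the proof of Theorem~\ref{thm:noncommutative wiener ergodic theorem} actually establishes for $p>1$) with $L_p(\ell_\infty)$-norm independent of $t$; since $\int_0^\infty|\omega(t)|\,dt<\infty$ and $L_p(\mathcal M;c_0)$ is a Banach space, the Bochner integral over $t$ lands in $L_p(\mathcal M;c_0)$, and Lemma~\ref{lem:Deju} finishes. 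You instead build an explicit projection $e$ from the b.a.u.\ convergence and a spectral cutoff of the factorization $ay_sb$, then run scalar dominated convergence in $t$. Both routes are valid; the paper's is shorter because the $c_0$-space machinery packages the projection-combining step you carry out by hand, while yours has the minor advantage of invoking only the \emph{statement} of Theorem~\ref{thm:noncommutative wiener ergodic theorem} rather than the $c_0$-membership buried in its proof. One simplification: your ``diagonal intersection of countably many projections'' is unnecessary---the meet of the single b.a.u.\ projection with the spectral cutoffs of $aa^*$ and $b^*b$ already gives both the uniform bound and the vanishing limit.
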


\begin{proof}
We only show the global case.
By Lemma \ref{lem:Deju}, it suffices to show $(\alpha(\varphi_r)x-\int\varphi\cdot F(x))_r\in L_p(\mathcal M;c_0)$. Since $\varphi$ is radial, we write $\varphi(u)=\varphi_0(|u|)$. Using the fact that $\varphi$ is compactly supported, by polar decomposition and integration by parts, we have
\begin{align*}
\alpha(\varphi_r)x-\int\varphi\cdot F(x)&=-|B(0,1)|\int^\infty_{0}\varphi'_0(s)s^n(\alpha(\beta_{sr})x-F(x))ds,
\end{align*}
where $\varphi'_0$ is the derivative of $\varphi_0$.
Now we clearly obtain the desired result by Wiener's ergodic theorem---Theorem \ref{thm:noncommutative wiener ergodic theorem} and the fact that $L_p(\mathcal M;c_0)$ is a Banach space since
$\int^\infty_{0}|\varphi'_0(s)|s^nds<\infty$.
\end{proof}

Let $\psi_0$ be the Fourier transform of a radial $C^\infty_c(\mathbb R^n)$ function. Assume further that $\psi_0(0) =1$ and that, for $1 < j < n/2$,
$$￼\partial^j_r\psi_0=0$$ where $\partial$ is the radial derivation operator. Such a function can be constructed in the following way: Let $\psi$ be any function that is the Fourier transform of a radial $C^\infty_c(\mathbb R^n)$ function and such that $\psi(0)=1$. For $\xi\in S^{n-1}$ and $r\geq0$, we then define
$$\psi_0(r\xi)= (\sum^n_{j=0}a_jr^{2j}) \psi(r\xi)$$
where the $a_j$'s are chosen inductively so as to have $\psi_0(0) = 1$ and then the required number of derivatives to vanish at 0.

Let us now define for $j \geq 1$, $\psi_j(\xi) = \psi_0(\xi/2^j) - \psi_0({\xi}/{2^{j-1}})$. Note that $\psi_j$ is still the Fourier transform of a radial compactly supported function, and for every $\xi\in\mathbb R^n$,
$$\sum^\infty_{j=0}\psi_j(\xi)=1.$$

For $j\geq0$, let $m_j:=\hat{\sigma}\psi_j$ and $\sigma_{j,1}=\hat{m}_j$. Here $\sigma$ is the normalized Lebesgue measure on the unit sphere $S^{n-1}$
Define $\sigma_{j,r}(u)=\frac{1}{r^n}\sigma_{j,1}(\frac{u}{r})$. Using these notations, it is easy to check that for any $r>0$, we have $\sigma_r=\sum^\infty_{j=0}\sigma_{j,r}$ in the distribution sense. The following estimate plays the key role in showing Theorem \ref{thm:noncommutative Jones ergodic theorem}.

\begin{proposition}\label{pro:key step}
Let $1<p\leq 2$, and $n\geq3$. For each $j\geq0$ and $f\in L_p{(\mathbb R^n;L_p(\mathcal M))}$,
\begin{align}\label{key step}
\|(\sigma_{j,r}\ast f)_{r>0}\|_{L_p(\ell_\infty)}\leq C2^{(n/p-(n-1))j}\|f\|_p.
\end{align}
\end{proposition}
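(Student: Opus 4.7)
The plan is to follow Rubio de Francia's approach for the scalar spherical maximal function, adapting the column/row decomposition of the noncommutative maximal norm to handle operator-valued convolutions. Concretely, I aim to interpolate between an $L_2$ endpoint estimate of order $2^{(2-n)j/2}$ and a weak-$(1,1)$ endpoint of order $2^j$; the linear interpolant of those exponents against $1/p$ is precisely $\alpha(p)=n/p-(n-1)$, yielding the proposition for $1<p\leq 2$.

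For the $L_2$ endpoint, write $x_r:=\sigma_{j,r}\ast f$. By Lemma~\ref{lem:inter between asy and sy} it suffices to prove the column estimate $\|(x_r)\|_{L_2(\ell_\infty^c)}\lesssim 2^{(2-n)j/2}\|f\|_2$ and its row analogue. The operator Cauchy--Schwarz inequality
$$(\partial_s x_s)^* x_s + x_s^*(\partial_s x_s) \leq \mu\,s\,(\partial_s x_s)^*(\partial_s x_s) + (\mu s)^{-1}\,x_s^*x_s\quad(\mu>0),$$
combined with the fundamental theorem of calculus applied to $s\mapsto x_s^* x_s$, yields
$$\sup_{r>0}\, x_r^* x_r \,\leq\, \mu\int_0^\infty s\,(\partial_s x_s)^*(\partial_s x_s)\,ds + \mu^{-1}\int_0^\infty s^{-1}\,x_s^* x_s\,ds.$$
Taking the $L_1$-norm on $L_\infty(\mathbb R^n)\overline{\otimes}\M$ and optimising $\mu$ produces the geometric-mean bound for $\big\|\sup_r^+ x_r^* x_r\big\|_1$. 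Each integral is evaluated via Plancherel applied to the $L_2(\M)$-valued Fourier transform on $\mathbb R^n$; using radiality of $m_j=\hat\sigma\,\psi_j$, its Fourier support on $|\xi|\sim 2^j$, and the decay $|\hat\sigma(\xi)|\lesssim|\xi|^{-(n-1)/2}$, I obtain $\int_0^\infty s^{-1}\|x_s\|_2^2\,ds\lesssim 2^{-(n-1)j}\|f\|_2^2$ and $\int_0^\infty s\,\|\partial_s x_s\|_2^2\,ds\lesssim 2^{(3-n)j}\|f\|_2^2$, whose geometric mean is $2^{(2-n)j}\|f\|_2^2$. The row bound is obtained by applying the same argument to $x_r^*$, and Lemma~\ref{lem:inter between asy and sy} merges the two into the symmetric $L_2(\ell_\infty)$ bound.

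For the weak-$(1,1)$ endpoint, elementary estimates on $\sigma_{j,1}=\widehat{m_j}$, using smoothness and compact support of $m_j$, yield $\|\sigma_{j,1}\|_\infty\lesssim 2^j$ and concentration of $\sigma_{j,1}$ in an annulus of width $2^{-j}$ around the unit sphere with Schwartz tails. After rescaling, both the positive and negative parts of $\sigma_{j,r}$ are dominated pointwise by $C\,2^j\,\beta_{2r}$. For positive $f$, positivity of the scalar kernels transfers this to the operator inequality $\sigma_{j,r}^{\pm}\ast f\leq C\,2^j\,\beta_{2r}\ast f$; monotonicity of the $\Lambda_1(\ell_\infty)$ quasinorm on positive sequences, together with the noncommutative Hardy--Littlewood weak maximal inequality (Theorem~\ref{thm:noncommuative wiener ergodic maximal} applied to translations on $L_\infty(\mathbb R^n)\overline{\otimes}\M$), then delivers the weak-type bound with constant $C\,2^j$. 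A four-positive-parts splitting of general $f$ removes the positivity assumption.

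Real interpolation of Marcinkiewicz type in the Junge--Xu framework between these two endpoints gives at $L_p$ an exponent equal to the linear interpolant of $(2-n)/2$ (at $p=2$) and $1$ (at $p=1$), which is exactly $n/p-(n-1)$, completing the proof. The main technical obstacle is the operator-level square function step for the $L_2$ endpoint: one has to verify that the scalar Cauchy--Schwarz/FTC identity lifts correctly to the column and row asymmetric maximal norms, and that Plancherel extends to $L_2(\M)$-valued transforms; both reduce to the Hilbert-space nature of $L_2(\M)$ and pose no serious difficulty.
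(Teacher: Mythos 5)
Your proposal matches the paper's proof: both establish the weak-$(1,1)$ endpoint with bound $2^{j}$ from the pointwise decay $|\sigma_{j,1}(u)|\lesssim 2^{j}(1+|u|)^{-M}$ and the operator-valued Hardy--Littlewood maximal inequality, obtain the $L_2$ endpoint $2^{(2-n)j/2}$ via a fundamental-theorem-of-calculus/$g$-function argument followed by Plancherel with the spherical decay estimates, and conclude via noncommutative Marcinkiewicz interpolation. Your operator Cauchy--Schwarz with weight $\mu s$ followed by optimization over $\mu$ is just a repackaging of the paper's H\"older step that extracts the geometric mean $\|G_j(f)\|_2\|\tilde G_j(f)\|_2$, and the passage through Lemma~\ref{lem:inter between asy and sy} to go from the column bound to the symmetric maximal norm is implicit in the paper's proof as well.
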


We put off its proof after we show Theorem \ref{thm:noncommutative Jones ergodic theorem}.
\begin{proof}
Interpolating estimate \eqref{key step} with the trivial estimate in $p=\infty$, we have for any $n/(n-1)<p<\infty$, there exists a $Q_p>0$ such that 
\begin{align*}
\|(\sigma_{j,r}\ast f)_{r>0}\|_{L_p(\ell_\infty)}\leq C2^{-Q_pj}\|f\|_p,
\end{align*}
for any $f\in L_p{(\mathbb R^n;L_p(\mathcal M))}$.
Since $\sigma_{j,1}$ has compact support, applying the transference principle---Remark \ref{rem:trans for functions}, we have for
$x\in L_p(\mathcal M)$,
\begin{align*}
\|(\alpha(\sigma_{j,r})x)_{r>0}\|_{L_p(\ell_\infty)}\leq C2^{-Q_pj}\|x\|_p.
\end{align*}
Fix one $x\in L_p(\mathcal M)$.
From this estimate, we see that $\sum^J_{j=0}\alpha(\sigma_{j,r})x$ is uniformly convergent in $L_p(\mathcal M)$, which allows us to identify $\alpha(\sigma_r)x$ as its limit.
On the other hand, from Lemma \ref{lem:pointwise for nice function}, for all $J\in \mathbb N$
$$(\alpha(\sum^J_{j=0}\sigma_{j,r})x-\int \sum^J_{j=0}\sigma_j\cdot F(x))_{r>0}\in L_p(\mathcal M;c_0).$$
Observe that
\begin{align*}
&\|(\alpha(\sigma_r)x-F(x))_{r>0}-(\alpha(\sum^J_{j=0}\sigma_{j,r})x-\int \sum^J_{j=0}\sigma_j\cdot F(x))_{r>0}\|_{L_p(\ell_\infty)}\\
&\leq \|(\alpha(\sigma_r)x-\alpha(\sum^J_{j=0}\sigma_{j,r})x)_{r>0}\|_{L_p(\ell_\infty)}+\|(1-\int \sum^J_{j=0}\sigma_j)\cdot F(x)\|_p\\
&\leq C\sum^\infty_{j=J+1}\|(\alpha(\sigma_{j,r})x)_{r>0}\|_{L_p(\ell_\infty)}+\|(1-\int \sum^J_{j=0}\sigma_j)\cdot F(x)\|_p\\
&\leq C\sum^\infty_{j=J+1}2^{-Q_pj}\|x\|_p+\|(1-\int \sum^J_{j=0}\sigma_j)\cdot F(x)\|_p,
\end{align*}
which tends vers 0, as $J\rightarrow\infty$. Therefore we have $(\alpha(\sigma_r)x-F(x))_{r>0}\in L_p(\mathcal M;c_0)$ due to the fact that $L_p(\mathcal M;c_0)$ is complete. Thus by Lemma \ref{lem:Deju}  we show that 
$(\sigma_r)_{r>0}$ is a global noncommutative pointwise ergodic family for all $n/(n-1)< p<\infty$, $n\geq3$.

\bigskip

The assertion that $(\sigma_r)_{r>0}$ is a local noncommutative pointwise ergodic family for all $n/(n-1)< p<\infty$, $n\geq3$ can be shown similarly, just by replacing $F(x)$ by $x$ in the previous arguments.
\end{proof}

Now let us show Proposition \ref{pro:key step}. We will show estimate \eqref{key step} by establishing the end-point estimates $p=1,2$ and then using noncommutative version of Marcinkiewicz interpolation theorem. It is worth to mention that contrary to the at most one-page proof of classical Marcinkiewicz interpolation theorem, the proof of noncommutative analogue is quite delicate. It was first shown in \cite{JuXu06}, then was improved or further generalized in \cite{BCO17} \cite{Dir15}. Using the noncommuative Marcinkiewicz interpolation theorem, we are reduced to show the end-point estimates.

\begin{lemma}\label{lem:11 estimate}
For all $j\geq0$ and $f\in L_1((\mathbb{R}^n)\overline{\otimes}\M))$, we have
$$\|{\sup_{r}}^+\sigma_{j,r}\ast f\|_{1,\infty}\leq C2^{j}\|f\|_1.$$
\end{lemma}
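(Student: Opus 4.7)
The plan is to deduce this weak-type $(1,1)$ bound from the operator-valued Hardy--Littlewood weak-$(1,1)$ inequality on $L_\infty(\mathbb R^n)\overline{\otimes}\M$ (Mei \cite{Mei07}, with dimension-free constant \cite{Hon13}) via a scalar pointwise majorization of $|\sigma_{j,r}|$ by a $2^j$-weighted sum of normalized ball indicators. First I would record the standard pointwise kernel decay
\[
|\sigma_{j,1}(x)| \;\leq\; C_N\, 2^{j}\,\bigl(1+2^{j}\bigl||x|-1\bigr|\bigr)^{-N}\qquad(x\in\mathbb R^n,\;N\geq 1),
\]
which is the source of the factor $2^j$. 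It follows by stationary phase applied to $\sigma_{j,1} = (\hat\sigma\,\psi_j)^{\vee}$, using the Bessel asymptotics of $\hat\sigma$ together with the vanishing-moment conditions built into $\psi_0$, exactly as in Rubio de Francia's treatment \cite{Rub86,Jam09}. Note also that $\sigma_{j,1}$ is real (and even), being the inverse Fourier transform of the real even radial function $\hat\sigma\,\psi_j$, so it admits the decomposition $\sigma_{j,r}=\sigma_{j,r}^+-\sigma_{j,r}^-$ into positive and negative parts.

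Scaling by $r$ and partitioning $\mathbb R^n$ into dyadic spherical shells of width $r\,2^{-j}$ about the sphere $|x|=r$, on the $k$-th such shell the kernel satisfies $|\sigma_{j,r}(x)|\leq r^{-n}\,2^{j}\langle k\rangle^{-N}$. Bounding the indicator of each shell by that of the ball $B(0,r(1+(k+1)2^{-j}))$ then yields the scalar pointwise inequality
\[
|\sigma_{j,r}(x)| \;\leq\; \sum_{k\ge 0} c_k\, \beta_{r(1+(k+1)2^{-j})}(x),
\qquad c_k := C\,2^{j}\langle k\rangle^{-N}\bigl(1+(k+1)2^{-j}\bigr)^{n},
\]
with $\sum_k c_k\leq C_n\, 2^j$ uniformly in $r$, once $N=N(n)$ is chosen large. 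Integrating this scalar inequality against a positive $f\in L_1(L_\infty(\mathbb R^n)\overline{\otimes}\M)$ upgrades it to the operator-valued pointwise bound
\[
0 \;\leq\; \sigma_{j,r}^{\pm}\!*f(x) \;\leq\; \sum_{k\ge 0} c_k\,\bigl(\beta_{r(1+(k+1)2^{-j})}*f\bigr)(x),
\]
valid at every $x\in\mathbb R^n$ and every $r>0$, in the order of positive operators in $\M$.

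To conclude, given $\lambda>0$ I would apply the operator-valued Hardy--Littlewood weak-$(1,1)$ inequality at level $\lambda/(C_n 2^{j})$ to obtain a \emph{single} projection $e\in L_\infty(\mathbb R^n)\overline{\otimes}\M$ satisfying $(\tau\otimes\!\int)(1-e)\le C\,2^{j}\lambda^{-1}\|f\|_1$ and $\|e(\beta_{r'}*f)e\|_\infty\le \lambda/(C_n 2^{j})$ for every $r'>0$. Sandwiching the pointwise operator bound between two copies of $e$ and using $A\leq\|A\|_\infty\,I$ for positive $A$ then gives $\|e(\sigma_{j,r}^{\pm}\!*f)e\|_\infty\leq\sum_k c_k\cdot\lambda/(C_n 2^{j})\leq\lambda$ for every $r>0$, whence $\|e(\sigma_{j,r}*f)e\|_\infty\leq 2\lambda$. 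This establishes the desired estimate for positive $f$; the extension to general $f$ is routine by the standard decomposition into four positive elements with $L_1$ norms controlled by $\|f\|_1$, taking the infimum of the four resulting projections. The main technical obstacle is the scalar kernel decay stated at the outset---that is where the specific construction of $\psi_0$ (with enough derivatives vanishing at the origin) is really used; beyond that, the noncommutative argument only exploits that a scalar pointwise bound on kernels integrates to an operator pointwise bound, and that a single projection from Mei/Hong's maximal inequality works uniformly over all radii, hence over the one-parameter subfamily $\{r(1+(k+1)2^{-j})\}_{k,r}$ appearing in the majorization.
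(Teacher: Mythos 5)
The proposal is correct and amounts to carrying out in detail the argument the paper delegates by reference: the one-sentence ``proof'' of Lemma \ref{lem:11 estimate} merely states the scalar kernel bound $|\sigma_{j,1}(u)|\leq C_M 2^j(1+|u|)^{-M}$ and points to Theorem 4.3 of \cite{CXY} for the noncommutative weak $(1,1)$ machinery, with a warning that $|x+y|\leq|x|+|y|$ fails for operators. Your three explicit steps match that warning and that reference exactly: (i) writing the real radial kernel as $\sigma_{j,r}=\sigma_{j,r}^+-\sigma_{j,r}^-$ so that one only needs to handle positive kernels applied to positive operators, which is precisely the workaround required by the failure of the scalar absolute-value inequality; (ii) majorizing $|\sigma_{j,r}|$ by a superposition $\sum_k c_k\,\beta_{r(1+(k+1)2^{-j})}$ with $\sum_k c_k\lesssim 2^j$, which is the source of the $2^j$; and (iii) using a \emph{single} projection supplied by the operator-valued Hardy--Littlewood weak $(1,1)$ inequality \cite{Mei07,Hon13}, valid uniformly over all radii $r'>0$, so that it controls all the ball averages in the majorization at once. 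The reduction of general $f\in L_1$ to four positive parts and the choice $\lambda' = \lambda/(C_n 2^j)$ are both standard and correct.

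One small point of divergence: you start from the sharper, shell-concentrated estimate $|\sigma_{j,1}(x)|\leq C_N 2^j(1+2^j||x|-1|)^{-N}$ rather than the paper's stated $|\sigma_{j,1}(u)|\leq C_M 2^j(1+|u|)^{-M}$. Both are true (the paper's bound is weaker away from the unit sphere but is trivially satisfied there thanks to the compact support of $\sigma_{j,1}$, a fact the paper uses elsewhere when invoking the transference principle), and both produce a radial decreasing $L^1$ majorant of $L^1$-norm $O(2^j)$. Your version makes the shell-to-ball majorization more transparent, but its derivation is slightly more work than what the paper actually requires; the $L^\infty$ bound $\|\sigma_{j,1}\|_\infty\lesssim 2^j$ together with compact support in $\{|\,|x|-1\,|\lesssim 2^{-j}\}$ is all that is needed, and that much follows from $\sigma_{j,1}=\sigma*\varphi_j$ with $\varphi_j$ a mollifier at scale $2^{-j}$, without a full stationary-phase expansion. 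This is a matter of presentation, not correctness.
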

As in the commutative setting, this estimate follows essentially from the operator-valued Hardy-Littlewood maximal inequality and the fact that  for any $M>n$, there
exists $C_{M}<\infty$ such that
$$|\sigma_j(u)|\leq C_M2^j(1+|u|)^{-M}.$$
It is worth to mention that due to noncommutativity,  elementary inequalities such as $|x+y|\leq|x|+|y|$ do not hold, so we can not just copy the classical arguments. See for instance Theorem 4.3 of \cite{CXY}.

\begin{lemma}
For any $j\geq0$ and $f\in L_2((\mathbb{R}^n)\overline{\otimes}\M))$,
we have
\begin{align*}
\|{\sup_{r>0}}^+\sigma_{j,r}\ast f\|_2\leq
C2^{(1/2-(n-1)/2)j}\|f\|_2.
\end{align*}
\end{lemma}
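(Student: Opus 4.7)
The plan is to mimic Rubio de Francia's classical control of the spherical maximal function by the geometric mean of a Lusin-type square function and a Littlewood--Paley $g$-function, adapted to the noncommutative setting through column and row asymmetric maximal norms.

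By Lemma \ref{lem:inter between asy and sy} with $p=2$, the symmetric maximal norm is controlled by the geometric mean of the column and row maximal norms, so it suffices to establish the stated bound for both $\|(\Phi_r)_{r>0}\|_{L_2(\mathcal{N};\ell^c_\infty)}$ and $\|(\Phi_r)_{r>0}\|_{L_2(\mathcal{N};\ell^r_\infty)}$, where $\Phi_s:=\sigma_{j,s}\ast f$ and $\mathcal{N}:=L_\infty(\mathbb R^n)\overline{\otimes}\mathcal{M}$. The row case follows from the column case applied to $(\Phi_s^*)_s$, so I focus on the column case; its squared norm equals $\|{\sup_{r>0}}^+\Phi_r^*\Phi_r\|_1$.

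The core is an integration-by-parts argument. Since $\widehat{\Phi_s}(\xi)=\hat\sigma(s\xi)\psi_j(s\xi)\hat f(\xi)\to 0$ in $L_2$ as $s\to\infty$ (for $j\geq 1$, by Schwartz decay of $\psi_j$), for every $r>0$ one has
$$\Phi_r^*\Phi_r=-\int_r^\infty\bigl[(\partial_s\Phi_s)^*\Phi_s+\Phi_s^*(\partial_s\Phi_s)\bigr]\,ds.$$
Expanding $(\Phi_s\pm sa\,\partial_s\Phi_s)^*(\Phi_s\pm sa\,\partial_s\Phi_s)\geq 0$ for $a>0$ yields the elementary operator inequality
$$\pm\bigl[\Phi_s^*\partial_s\Phi_s+(\partial_s\Phi_s)^*\Phi_s\bigr]\leq (sa)^{-1}\Phi_s^*\Phi_s+sa(\partial_s\Phi_s)^*(\partial_s\Phi_s),$$
so that, uniformly in $r>0$,
$$\Phi_r^*\Phi_r\leq a^{-1}H+aG,\qquad H:=\int_0^\infty\!\frac{\Phi_s^*\Phi_s}{s}\,ds,\ \ G:=\int_0^\infty\!s\,(\partial_s\Phi_s)^*(\partial_s\Phi_s)\,ds.$$
Taking the $L_1$ norm on $\mathcal N$ gives $\|{\sup_{r>0}}^+\Phi_r^*\Phi_r\|_1\leq a^{-1}\|H\|_1+a\|G\|_1$, and minimising in $a$ yields $\|{\sup_{r>0}}^+\Phi_r^*\Phi_r\|_1\leq 2\sqrt{\|H\|_1\|G\|_1}$.

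The two scalar integrals are computed by Plancherel in $L_2(\mathcal{N})$ and the radial substitution $t=s|\xi|$:
$$\|H\|_1=\|f\|_2^2\int_0^\infty|m_j(t)|^2\,\tfrac{dt}{t},\qquad \|G\|_1=\|f\|_2^2\int_0^\infty t\,|m_j'(t)|^2\,dt,$$
where $m_j$ denotes the radial profile of $\hat\sigma\psi_j$. The stationary-phase bounds $|\hat\sigma(t)|,|\hat\sigma'(t)|\lesssim (1+t)^{-(n-1)/2}$ combined with the concentration of $\psi_j$ near $\{t\sim 2^j\}$ give $\|H\|_1\lesssim 2^{-(n-1)j}\|f\|_2^2$ and $\|G\|_1\lesssim 2^{-(n-3)j}\|f\|_2^2$. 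Their geometric mean is $2^{-(n-2)j}\|f\|_2^2=2^{2(1/2-(n-1)/2)j}\|f\|_2^2$, so taking square roots gives exactly the claimed bound.

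The main obstacle will be making the noncommutative operations rigorous: the fundamental-theorem-of-calculus identity, the operator bound $\Phi_r^*\Phi_r\leq a^{-1}H+aG$, and the transfer to the $L_2(\ell^c_\infty)$ norm all require that $H$ and $G$ be interpreted as positive operators on $L_2(\mathcal N)$ (or, equivalently, via the duality with $L_1(\mathcal N;\ell_1)$), which is most conveniently done by working first with $f$ in a smooth, rapidly decaying dense subclass and extending by density. A separate, simpler point is the low-frequency piece $j=0$, where $\Phi_s\to f$ as $s\to 0$ and $\|H\|_1$ formally diverges; there one bypasses the square-function argument by the pointwise domination $|\sigma_{0,r}|\lesssim \beta_r$ (since $\sigma_{0,1}$ is compactly supported and bounded, being the convolution of the spherical measure with a Schwartz function) and invokes the operator-valued Hardy--Littlewood maximal inequality of \cite{Mei07,Hon13}.
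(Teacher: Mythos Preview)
Your proof is correct and follows essentially the same Rubio de Francia square-function strategy as the paper: fundamental theorem of calculus on $\Phi_r^*\Phi_r$, control by the geometric mean of two $g$-function norms computed via Plancherel, and the separate treatment of $j=0$ by Hardy--Littlewood. Your version is in fact slightly more complete---you make explicit the reduction from the symmetric to the column maximal norm via Lemma~\ref{lem:inter between asy and sy} (which the paper's proof leaves implicit, bounding only $\|\sup_t^+|\sigma_{j,t}\ast f|^2\|_1^{1/2}$), and your operator AM--GM bound $\pm(A^*B+B^*A)\leq a^{-1}A^*A+aB^*B$ is a clean substitute for the paper's route through $|\,\cdot\,|$, the triangle inequality, and H\"older---but these are minor variations on the same argument.
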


\begin{proof}
For $j=0$, the method in showing Lemma \ref{lem:11 estimate} applies. We only need to show $j\geq1$.
By density, it suffices to show the desired estimate for $S^+_{\M}$-valued Schwartz function $f$.
We consider the following two column $g$-functions:
$$G_j(f)(u)=\big(\int^{\infty}_0|\sigma_{j,t}\ast f(u)|^2\frac{dt}{t}\big)^{\frac12},$$
and
$$\tilde{G}_j(f)(u)=\big(\int^{\infty}_0|{\tilde{\sigma}_{j,t}\ast f(u)}|^2\frac{dt}{t}\big)^{\frac12},$$
where
$$\tilde{\sigma}_{j,t}=t\frac{d\sigma_{j,t}}{dt}$$
By the fundamental theorem of calculus, we deduce that
\begin{align*}
&({\sigma_{j,t}\ast f})^2=\int^t_{\varepsilon}\frac{d}{ds}(\sigma_{j,s}\ast f)^2ds+(\sigma_{j,\varepsilon}\ast f)^2\\
&=\int^t_{\varepsilon}\tilde{\sigma}_{j,s}\ast f^*\sigma_{j,s}\ast f+\sigma_{j,s}\ast f^*\tilde{\sigma}_{j,s}\ast f\frac{ds}{s}+(\sigma_{j,\varepsilon}\ast f)^2\\
&\leq \int^t_{\varepsilon}|\tilde{\sigma}_{j,s}\ast f^*\sigma_{j,s}\ast f+\sigma_{j,s}\ast f^*\tilde{\sigma}_{j,s}\ast f|\frac{ds}{s}+(\sigma_{j,\varepsilon}\ast f)^2.
\end{align*}
Hence by triangle inequality and H\"older inequality, we have
\begin{align*}
\|{\sup_{t}}^+|{\sigma}_{j,s}\ast f|^2\|^{1/2}_1&\leq\|\int^{\infty}_0|\tilde{\sigma}_{j,s}\ast f^*{\sigma}_{j,s}\ast f+{\sigma}_{j,s}\ast f^*\tilde{\sigma}_{j,s}\ast f|\frac{ds}{s}\|^{1/2}_1\\
&\;\;\;\;\;+\|({\sigma}_{j,\varepsilon}\ast f)^2\|^{1/2}_1\\
&\leq2\|\int^{\infty}_0\tilde{\sigma}_{j,s}\ast f^*{\sigma}_{j,s}\ast f\|^{1/2}_1\\
&\;\;\;\;\;+2\|\int^{\infty}_0{\sigma}_{j,s}\ast f^*\tilde{\sigma}_{j,s}\ast f\frac{ds}{s}\|^{1/2}_1+\|({\sigma}_{j,\varepsilon}\ast f)^2\|^{1/2}_1\\
&\leq4\|G_j(f)\|^{\frac 12}_2\|\tilde{G}_j(f)\|^{\frac 12}_2+\|f_{\varepsilon,j}(x)^2\|^{1/2}_1.\\
&\leq8\|G_j(f)\|^{\frac 12}_2\|\tilde{G}_j(f)\|^{\frac 12}_2.\\
\end{align*}
The last inequality is due to the fact that  $\|f_{\varepsilon,j}(x)^2\|^{1/2}_1$ tends to 0 as $\varepsilon$ tends to $\infty$ by Lebesgue dominated theorem.
The rest of the arguments are similar to those in classical setting \cite{Rub86} or \cite{Jam09}. That is, based on the estimates
$$|\hat{\sigma}(\xi)|+|\nabla\hat{\sigma}(\xi)|\leq
C_n(1+|\xi|)^{(1-n)/2},$$ 
using Plancherel's theorem and the properties of $\psi_j$ to finish the proof. We omit the details.
\end{proof}

\section{Appendix}
In this Appendix, we present the spectral method to find a dense subset in order to prove noncommutative pointwise ergodic theorems, which might be useful in other cases when the transference principle is not available, for instance, when the underlying group is not amenable.

To prove the pointwise ergodic theorem, we first need the following maximal ergodic inequality, which follows from the noncommutative transference principle---Theorem \ref{thm:trans principle} and the operator-valued version of Stein's spherical maximal inequality---Proposition 4.1 of \cite{Hon13}. Let $(\M,\tau,\mathbb{R}^n,\alpha)$ be a fixed $W$*-dynamical system.
\begin{theorem}\label{thm:sphere maximal}
Let $n\geq3$ and $p>n/(n-1)$. Let $x\in L_p(\mathcal M)$. Then there exists a constant $C_{p,n}$
such that
\begin{align}\label{sphere maximal}
\|{\sup_{r>0}}^+{\alpha(\sigma_r) x}\|_p\leq C_{p,n}\|x\|_p.
\end{align}
\end{theorem}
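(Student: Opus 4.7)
The plan is to deduce this ergodic maximal inequality directly from Theorem \ref{thm:trans principle} (the noncommutative Calder\'on transference principle) combined with the operator-valued spherical maximal inequality on $\mathbb{R}^n$ established as Proposition 4.1 of \cite{Hon13}. The strategy is the same one used in the commutative setting: transfer the translation-action estimate on Euclidean space to an arbitrary $W^*$-dynamical system.

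First I would check that the hypotheses of Theorem \ref{thm:trans principle} are satisfied by the family $(\sigma_r)_{r>0}$. Since $\sigma_r$ is the normalized surface measure on the sphere of radius $r$, its support lies in the closed Euclidean ball of radius $r$, so the support condition holds with absolute constant $C=1$. Hence Theorem \ref{thm:trans principle} applies as soon as the translation-action counterpart, namely the family $\bigl(\lambda(\sigma_r)\bigr)_{r>0}$, is completely bounded from $L_p(\mathbb{R}^n)$ into $L_p(L_\infty(\mathbb{R}^n);\ell_\infty)$.

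The operator-valued version of Stein's spherical maximal theorem proved in \cite{Hon13} (Proposition 4.1) furnishes exactly this: for $n\geq 3$ and $p>n/(n-1)$, and for every noncommutative measure space $(\mathcal{N},\mathrm{tr})$, the inequality
$$\bigl\|{\sup_{r>0}}^{+}\sigma_r\ast f\bigr\|_{L_p(L_\infty(\mathbb R^n)\bar{\otimes}\mathcal N)}\leq C_{p,n}\|f\|_{L_p(L_\infty(\mathbb R^n)\bar{\otimes}\mathcal N)}$$
holds for all $f$. This is the complete boundedness required by the hypothesis of Theorem \ref{thm:trans principle}. Feeding this into the transference principle (with the $W^*$-dynamical system $(\mathcal{M},\tau,\mathbb{R}^n,\alpha)$) then yields the bound
$$\bigl\|{\sup_{r>0}}^{+}\alpha(\sigma_r)x\bigr\|_p\leq C_{p,n}\|x\|_p,$$
as desired.

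There is essentially no serious obstacle once the two ingredients are in hand: the argument is a direct invocation of the transference machinery developed in Section~3. The only point requiring a line of care is to verify that the support of $\sigma_r$ satisfies the assumption of Theorem \ref{thm:trans principle} (trivially, since $\mathrm{supp}\,\sigma_r\subset\overline{B(0,r)}$) and to cite the cb-version of Stein's inequality in the appropriate generality. The constant $C_{p,n}$ produced this way is exactly the one coming from Proposition 4.1 of \cite{Hon13}, with no further degradation incurred by the transference step.
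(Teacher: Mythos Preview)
Your proposal is correct and follows exactly the paper's own justification: the paper states (immediately before the theorem) that the inequality ``follows from the noncommutative transference principle---Theorem \ref{thm:trans principle} and the operator-valued version of Stein's spherical maximal inequality---Proposition 4.1 of \cite{Hon13}.'' Your additional remark that the support condition of Theorem \ref{thm:trans principle} is satisfied with $C=1$ and that the constant is inherited without degradation simply makes explicit what the paper leaves implicit.
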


Let $K_n:=SO(n)$ denote the rotation group. The Euclidean motion group $G_n:=\mathbb R^n\rtimes K_n$ with group law of $G_n$ given by
$$(u_1,k_1)(u_2,k_2)=(u_1+k_1u_2, k_1k_2).$$
The inverse of an element $(u,k)\in G_n$ is given by $(-u,k^{-1})$ and $(0,I)$ serves as the identity $e$ where $I$ is the $n\times n$ identity matrix. Then $K_n$ and $\mathbb R^n$ are isomorphic to two subgroups of $G_n$. 

A function $f$ on $\mathbb R^{n}$ is said to be radial if $f$ is a function of $|u|$, equivalently $f(ku)=f(u)$ for all $u\in \mathbb R^n$ and $k\in K_n$. Let $L^1(\mathbb R^n,K_n)$ denote the subspace of radial functions in $L^1(\mathbb R^n)$. This space is canonically identical with $L^1(G_n,K_n)$, the subspace of bi-$K_n$-invariant functions in $L^1(G_n)$. The radial functions on $\mathbb R^n$ form a commutative convolution algebra, since this algebra is canonically isomorphic to the algebra of bi-$K_n$-invariant functions on $G_n$ and it is well-known (see e.g. \cite{FaHa87}, \cite{Str91}) that $(G_n,K_n)$ form a Gelfand pair.

As it is well-known (see e.g. \cite{BJR92}), the complex homomorphisms of a Gelfand pair $L^1(G_n,K_n)$ are given by bounded spherical functions. Bounded $K_n$-spherical functions on $G_n$ are characterized by satisfying $\phi(e)=1$ and the integrals equation
$$\int_{K_n}\phi(akb)dk=\phi(a)\phi(b),\;a,b\in G_n.$$
The family of spherical functions is given for each $s>0$ by
$$\varphi_s(u)=\frac{2^{\frac{n-2}{2}}\Gamma(\frac n2)}{(s|u|)^{\frac{n-2}{2}}}J_{\frac{n-2}{2}}(s|u|)$$
where $J_{\frac {n-2}{2}}$ is the Bessel function of order $\frac{n-2}{2}$, and for $s=0$, the spherical function $\varphi_0(u)=1$ identically. Thus the Gelfand spectrum $\Sigma$ of the algebra $L^1(G_n,K_n)$, equivalently the algebra $L^1(\mathbb R^n,K_n)$, is the union of the Bessel spectrum $(0,\infty)$ and the trivial character $\{0\}$. In what follows $\varphi_s(r)$ stands for $\varphi_s(u)$ with $|u|=r$ when $s>0$ and $\varphi_s(r)=1$ identically when $s=0$.

Let $M(\mathbb R^n,K_n)$ denote the norm-closed convolution algebra generated by the surface measures $\{\sigma_{r}\}_{r>0}$ in $M(\mathbb R^n)$. Since finite linear combinations of functions of the form $\sigma_{r_1}\ast\sigma_{r_2}\ast\dotsm\ast\sigma_{r_k}$ where $r_i>0$ and $k\geq3$ are dense in $L^1(\mathbb R^n,K_n)$ whenever $n\geq3$, and thus $L^1(\mathbb R^n,K_n)\subset M(\mathbb R^n,K_n)$ as a subalgebra. Let $\psi$ be a non-zero continuous complex homomorphism of $M(\mathbb R^n,K_n)$. Then the radial function $\varphi(g)=\psi(m_{K_n}\ast\delta_g\ast m_{K_n})$ equals $\varphi_s(g)$ for some $s\in\Sigma$. Whence, restriction of complex characters from $M(\mathbb R^n,K_n)$ to its subalgebra $L^1(\mathbb R^n,K_n)$ induces a canonical identification of the Gelfand spectrum of the two algebras.

The previous explanations enable us to do the following arguments. Being a weak $*$-continuous action of $\mathbb R^n$ on a von Neumann algebra $\M$, $\alpha$ induces a strongly continuous unitary representation of $\mathbb R^n$ on $L_2(\M)$.  Then $\alpha$ determines canonically a norm continuous $*$-representation of the algebra $M(\mathbb R^n,K_n)$. Let us denote by $\mathcal{A}_{\alpha}$ the commutative $C^*$-algebra which is the closure of $\alpha(M(\mathbb R^n,K_n))$ in the operator norm. Let $\Sigma_\alpha$ denote the spectrum of $\mathcal{A}_\alpha$ which is by definition the set of all non-zero norm continuous complex homomorphisms of $\mathcal{A}_\alpha$.
Clearly $\Sigma_\alpha$ is a subset of the Gelfand spectrum of $M(\mathbb R^n,K_n)$. Consequently, every symmetric (self-adjoint) measure $\mu$ in $M(\mathbb R^n,K_n)$ is mapped to a self-adjoint operator on $L_2(\M)$, whose spectrum is the set $\{\varphi_s(\mu):\;s\in\Sigma_{\alpha}\}$. As a consequence, we have the following spectral decomposition,
\begin{align}\label{spectral decomposition}
\alpha(\sigma_r)=\int_{s\in\Sigma_\alpha}\varphi_s(r)de_{s}.
\end{align}

Moreover, we need the following asymptotic estimates of spherical functions. 

\begin{lemma}\label{lem:pointwise spectral estimates}
Fix $\varepsilon>0$, and let $\Sigma_\varepsilon=\{s:\;\varepsilon\leq s\leq \varepsilon^{-1}\}$. Then
\begin{align}\label{pointwise spectral estimates}
\sup_{s\in\Sigma_{\varepsilon}}\left|\varphi_s(r)\right|\leq {C_{\varepsilon,n}}(1+r)^{-\frac{n-1}{2}},
\end{align}
where $C_{\varepsilon,n}$ is a positive constant independent of $r$.
\end{lemma}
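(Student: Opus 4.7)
The plan is to exploit the fact that $\varphi_s(r)$ depends on $(s,r)$ only through the product $sr$, and then to read off the pointwise decay from the large-argument asymptotics of the Bessel function $J_{(n-2)/2}$.

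First I would introduce the auxiliary single-variable function
$$\phi(t)=\frac{2^{(n-2)/2}\Gamma(n/2)}{t^{(n-2)/2}}\,J_{(n-2)/2}(t),\qquad t>0,$$
extended continuously to $t=0$ by $\phi(0)=1$. This extension is justified by the near-origin expansion $J_\nu(t)\sim (t/2)^\nu/\Gamma(\nu+1)$, combined with $\Gamma((n-2)/2+1)=\Gamma(n/2)$. With this notation the definition recorded in the text rewrites simply as $\varphi_s(r)=\phi(sr)$, so the whole problem reduces to a pointwise estimate on the single function $\phi$.

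Next I would establish the uniform bound
$$|\phi(t)|\leq C_n(1+t)^{-(n-1)/2}\qquad\text{for all }t\geq 0.$$
On any compact interval $[0,T]$ this is immediate from the continuity of $\phi$. For large $t$ I would invoke the classical Hankel asymptotic
$$J_\nu(t)=\sqrt{\tfrac{2}{\pi t}}\,\cos\!\left(t-\tfrac{\nu\pi}{2}-\tfrac{\pi}{4}\right)+O(t^{-3/2}),$$
which in particular yields $|J_{(n-2)/2}(t)|\leq C\,t^{-1/2}$; inserted in the definition of $\phi$ this gives $|\phi(t)|\leq C_n\,t^{-(n-2)/2-1/2}=C_n\,t^{-(n-1)/2}$, matching the claim.

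Finally I would use the membership $s\in\Sigma_\varepsilon$ to trade decay in $sr$ for decay in $r$. Since $s\geq \varepsilon$, one has $1+sr\geq 1+\varepsilon r\geq \min(\varepsilon,1)(1+r)$, so
$$|\varphi_s(r)|=|\phi(sr)|\leq C_n(1+sr)^{-(n-1)/2}\leq C_{\varepsilon,n}(1+r)^{-(n-1)/2}$$
uniformly in $s\in\Sigma_\varepsilon$, with $C_{\varepsilon,n}=C_n\,\min(\varepsilon,1)^{-(n-1)/2}$, and taking the supremum over $s\in\Sigma_\varepsilon$ closes the argument. The only genuine technical input is the Hankel asymptotic, which is standard; note that the upper bound $s\leq\varepsilon^{-1}$ plays no role, but the lower bound $s\geq\varepsilon$ is essential, since $\varphi_s(r)\to 1$ as $s\to 0^+$ and any $r$-decay would be destroyed if $s$ were allowed to approach zero.
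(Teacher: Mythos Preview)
Your proof is correct and follows exactly the route the paper indicates: the paper does not give a detailed argument but simply remarks that the estimate follows from the standard large-argument asymptotics of Bessel functions (citing Grafakos), which is precisely the Hankel expansion you invoke. Your write-up is in fact more explicit than the paper's, including the useful observation that only the lower bound $s\geq\varepsilon$ is needed.
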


The estimates follow from the standard expansion of the Bessel functions at infinity. See for instance Appendix B.7 of \cite{Gra08} for detailed informations.

\begin{theorem}\label{thm:indvidual ergodic theorem}
Let $n\geq4$.
Let  $x\in L_p(\M)$. 
The family $\alpha(\sigma_r)x$ converges to $F(x)$, b.a.u for $n/(n-1)<p<\infty$ as $r\rightarrow\infty$.
\end{theorem}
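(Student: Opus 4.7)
The strategy is to marry the maximal inequality of Theorem \ref{thm:sphere maximal} with pointwise convergence on a spectrally-defined dense subset, extracting the decay from the Bessel-function representation \eqref{spectral decomposition} of $\alpha(\sigma_r)$. The first preliminary is to identify the fixed-point subspace spectrally: I claim the spectral projection $E(\{0\})$ of $\mathcal{A}_\alpha$ coincides with $F$. Indeed, $E(\{0\})x=x$ forces $\alpha(\sigma_r)x=\varphi_0(r)x=x$ for every $r>0$, whence $\alpha(\beta_r)x=x$ by the radial decomposition of Lebesgue measure, and then $x=F(x)$ by Theorem \ref{thm:noncommutative wiener ergodic theorem}; the reverse containment $F\leq E(\{0\})$ is trivial. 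Thus $(I-F)L_2(\M)=E((0,\infty))L_2(\M)=\overline{\bigcup_{\varepsilon>0}E(\Sigma_\varepsilon)L_2(\M)}$, where $\Sigma_\varepsilon=[\varepsilon,\varepsilon^{-1}]$.

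I then take as the dense subset
$$\mathcal{D}:=FL_p(\M)+\bigcup_{\varepsilon>0}E(\Sigma_\varepsilon)\bigl(L_2(\M)\cap L_\infty(\M)\bigr),$$
which is $L_2$-dense by the previous paragraph; $L_p$-density will follow from a truncation argument combining $L_2$-density of $L_p\cap L_2\cap L_\infty$ with boundedness/interpolation properties that bring the spectral cutoff into the $L_p$ setting. For $x\in\mathcal{D}$ with $F(x)=0$ and spectral support in $\Sigma_\varepsilon$, the formula \eqref{spectral decomposition} and Lemma \ref{lem:pointwise spectral estimates} supply
$$\|\alpha(\sigma_r)x\|_2\leq \sup_{s\in\Sigma_\varepsilon}|\varphi_s(r)|\cdot\|x\|_2\leq C_{\varepsilon,n}(1+r)^{-(n-1)/2}\|x\|_2,$$
together with the analogous oscillation bound $\sup_{r\in[k,k+1]}\|\alpha(\sigma_r)x-\alpha(\sigma_k)x\|_2\leq C_{\varepsilon,n}k^{-(n-1)/2}\|x\|_2$ obtained from the asymptotics $|\partial_r\varphi_s(r)|=O_\varepsilon(r^{-(n-1)/2})$ for $s\in\Sigma_\varepsilon$.

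To convert this $L_2$-decay into b.a.u.\ convergence I apply a noncommutative Chebyshev inequality: for each integer $k$ and each weight $\lambda_k>0$ there is a projection $e_k$ with $\|e_k\alpha(\sigma_k)x\|_\infty\leq\lambda_k$ and $\tau(e_k^\perp)\leq\|\alpha(\sigma_k)x\|_2^2/\lambda_k^2$. Choosing $\lambda_k$ so that both $\lambda_k\to 0$ and $\sum_k\tau(e_k^\perp)$ is small (the decay above makes this possible precisely for $n\geq 4$), the meet $e=\bigwedge_k e_k$ is a projection of arbitrarily small cotrace realizing $\|e\alpha(\sigma_k)xe\|_\infty\to 0$ along integers. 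Folding the interval oscillation estimate into the same meet construction, and invoking Lemma \ref{lem:monotinicity of maximal norm} to reduce continuous suprema to countable ones, upgrades convergence to all $r>0$. With b.a.u.\ convergence on $\mathcal{D}$ in hand, the standard density argument (as in the proof of Theorem \ref{thm:noncommutative wiener ergodic theorem}) combined with Theorem \ref{thm:sphere maximal} extends the conclusion to all of $L_p(\M)$ for $n/(n-1)<p<\infty$.

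The critical bottleneck is the simultaneous requirement that $\lambda_k\to 0$ and $\sum_k\tau(e_k^\perp)$ be summable after the interval-oscillation estimate is incorporated; the dimension hypothesis $n\geq 4$ enters exactly here, since for $n=3$ the decay exponent $(n-1)/2$ drops to $1$ and the balancing breaks down, motivating the alternative Littlewood-Paley treatment of Section 5 (cf.\ Remark \ref{n34}). A secondary technical point is upgrading the $L_2$-density of $\mathcal{D}$ to $L_p$-density, which requires a careful approximation step because the spectral projections $E(\Sigma_\varepsilon)$ live a priori only on $L_2(\M)$; this is where I would expect the heaviest bookkeeping.
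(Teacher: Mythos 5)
Your high-level strategy coincides with the paper's: identify a spectrally defined dense subset of $\mathcal{H}_\Sigma$, extract $r^{-(n-1)/2}$ decay from the Bessel asymptotics of Lemma \ref{lem:pointwise spectral estimates} via the spectral decomposition \eqref{spectral decomposition}, and then leverage the maximal inequality of Theorem \ref{thm:sphere maximal} to pass to general $x$. The real divergence is your mechanism for converting $L_2$-norm decay into b.a.u.\ convergence: a hand-built Chebyshev-plus-meet-of-projections construction, versus the paper's membership in $L_2(\M;c_0)$ followed by Lemma \ref{lem:Deju}. Your route has three genuine gaps. (i) The step ``folding the interval oscillation estimate into the same meet construction'' is not supported: an $L_2$-bound on $\sup_{r\in[k,k+1]}\|\alpha(\sigma_r)x-\alpha(\sigma_k)x\|_2$ controls each operator separately but does not produce a single projection realizing $\sup_{r\in[k,k+1]}\|e(\cdot)e\|_\infty$. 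What is actually needed is a Chebyshev estimate for the $L_2(\M;\ell_\infty([k,k+1]))$-norm of the whole family, and Lemma \ref{lem:monotinicity of maximal norm} says nothing about replacing a continuous index by a countable one; there is no noncommutative Egorov lurking behind it. (ii) Your announced bottleneck is arithmetically inconsistent with your own inequalities: $\sum_k\tau(e_k^\perp)\lesssim\sum_k k^{-(n-1)}/\lambda_k^2$ with $\lambda_k=k^{-\delta}$, $0<\delta<1/2$, is summable already for $n\geq3$. The hypothesis $n\geq4$ in the paper enters at a different place — in the convergence of $\int_1^\infty t^{-(n-1)/2}\,dt$ (equivalently \eqref{failure of 3}), i.e.\ from summing \emph{first powers} of $L_2(\ell_\infty)$-norms when assembling membership in $L_2(\M;c_0)$, not from summing squares in a Chebyshev bound. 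The mismatch indicates the proposed argument is not in control of where the dimension restriction genuinely arises. (iii) The claimed $L_p$-density of $\mathcal{D}=FL_p(\M)+\bigcup_\varepsilon E(\Sigma_\varepsilon)(L_2\cap L_\infty)$ is asserted but not proved, and this is not a minor bookkeeping issue: the spectral projections $E(\Sigma_\varepsilon)$ only act a priori on $L_2(\M)$. The paper sidesteps this altogether by proving the full $p=2$ statement first, then for $p\neq2$ switching to the elementary dense subset $L_1(\M)\cap\M$ and interpolating $\|(\alpha(\sigma_r)x)_{t<r<s}\|_{L_p(\ell_\infty)}$ between the $L_q$ maximal bound and the already-established $L_2(c_0)$ decay.

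A smaller point: your identity $(I-F)L_2(\M)=E((0,\infty))L_2(\M)$ omits the subspace $\mathcal{H}_0$ of operators annihilated by every $\alpha(\sigma_r)$ (the kernel of the $*$-representation of $M(\mathbb R^n,K_n)$). The paper works with the three-way orthogonal decomposition $L_2(\M)=\mathcal{H}_1\oplus\mathcal{H}_\Sigma\oplus\mathcal{H}_0$ and notes that on $\mathcal{H}_1$ and $\mathcal{H}_0$ one has $\alpha(\sigma_r)x-F(x)=0$ identically, so only $\mathcal{H}_\Sigma$ requires work. The omission is harmless in substance, but together with the other points it shows the spectral picture is not quite in focus. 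To salvage your approach one would need to (a) state and use the $L_2(\ell_\infty)$-Chebyshev inequality over the unit intervals $[k,k+1]$, bounding $\|(\alpha(\sigma_r)x)_{r\in[k,k+1]}\|_{L_2(\ell_\infty)}$ by combining the decay of $\|\alpha(\sigma_k)x\|_2$ with the derivative bound $|\partial_r\varphi_s(r)|\lesssim_\varepsilon r^{-(n-1)/2}$; and (b) replace the $L_p$-density claim by the paper's interpolation argument. After those repairs the proof would be correct, though it would be a longer route to the same destination as the paper's $L_2(\M;c_0)$ argument.
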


\begin{proof}
Let us first prove the case $p=2$. Note that $L_2(\M)$ is  the orthogonal sum of three closed subspace: $\mathcal{H}_1=$ the space of operators invariant under each $\alpha(\sigma_r)$, $\mathcal{H}_{\Sigma}=$ the space of operators whose spectral measure is supported in the Bessel spectrum, and finally, $\mathcal{H}_0=$ the space of operators in the kernel of each $\alpha(\sigma_r)$. Clearly, in the first and the third subspace we have $\alpha(\sigma_r)x-F(x)=0$. Hence by Lemma \ref{lem:Deju}, it suffices to prove $(\alpha(\sigma_r)x)_{r>1}\in L_2(\M;c_0)$ for $x\in \mathcal{H}_{\Sigma}$. By the maximal ergodic theorem---Theorem \ref{thm:sphere maximal}, it suffices to prove $(\alpha(\sigma_r)x)_{r>1}\in L_2(\M;c_0)$ for $x$ in some dense set of $\mathcal{H}_{\Sigma}$. {Indeed, suppose we have $(\alpha(\sigma_r)y)_{r>1}\in L_2(\M;c_0)$ for all $y$ in a dense set of $\mathcal{H}_{\Sigma}$}. Then for fixed $x\in \mathcal{H}_{\Sigma}$, for any $\delta>0$, there exists a $y$ in the dense subset of $\mathcal{H}_{\Sigma}$, such that $\|x-y\|_2\leq\delta$. Therefore by maximal inequality (\ref{sphere maximal})
\begin{align*}
\|(\alpha(\sigma_r)x)_{r>1}-(\alpha(\sigma_r)y)_{r>1}\|_{L_2(\ell_{\infty})}\leq C\|x-y\|_2\leq C\delta.
\end{align*}
Since $\delta$ is arbitrary, $(\alpha(\sigma_r)x)_{r>1}$ is in the closure of $L_2(\M;c_0)$, thus belongs to $L_2(\M;c_0)$ because $L_2(\M;c_0)$ is closed.

For $\varepsilon>0$, let $\mathcal{H}_{\varepsilon}$ be the subspace of operators $x\in L_2(\M)$ whose spectral measure $\langle de(x),x\rangle$ is supported in $\Sigma_{\varepsilon}$ defined in Lemma \ref{lem:pointwise spectral estimates}. The dense set of $\mathcal{H}_{\Sigma}$ we shall consider is
$\mathcal{H}_{\varepsilon}$. For $x\in \mathcal{H}_{\varepsilon}$, by spectral decomposition (\ref{spectral decomposition}) as well as the spectral estimates (\ref{pointwise spectral estimates}), we have
$$\|\alpha(\sigma_r)x\|_2\leq \sup_{s\in\Sigma_{\varepsilon}}|\varphi_{s}(r)|\|x\|_2\leq C_{\varepsilon,n}r^{-\frac{n-1}{2}}\|x\|_2.$$
Then the fact that  the space $L_2(\M;c_0)$ is complete yields that $(\alpha(\sigma_r)x)_{r>1}\in L_2(\M;c_0)$, since
$$(\alpha(\sigma_r)x)_{r>1}=\int^\infty_1(\alpha(\sigma_r)x)_{r=t}dt$$
and
\begin{align}\label{failure of 3}
\int^\infty_1t^{-n/2+1/2}dt<\infty.
\end{align}

For other cases $p\neq2$, using again the maximal ergodic theorem---Theorem \ref{thm:sphere maximal}, it suffices to prove $(\alpha(\sigma_r)x)_{r>1}\in L_p(\M;c_0)$ for $x\in L_1(\mathcal M)\cap \mathcal M$, which is a dense subset of $L_p(\mathcal M)$. Without loss of generality, we assume $p<2$. Find $q$ and $\theta$ such that $(2n-1)/(2n-2)<q<p$ and $1/p=(1-\theta)/q+\theta/2$. Let $x\in L_1(\mathcal M)\cap \mathcal M$. For any $1<t<s$, by maximal inequality (\ref{sphere maximal}),
\begin{align*}
\|(\alpha(\sigma_r)x)_{t<r<s}\|_{L_p(\ell_{\infty})}&\leq \|(\alpha(\sigma_r)x)_{t<r<s}\|^{1-\theta}_{L_q(\ell_{\infty})}\|(\alpha(\sigma_r)x)_{t<r<s}\|^\theta_{L_2(\ell_{\infty})}\\
&\leq C_q^{1-\theta}\|x\|_q\|(\alpha(\sigma_r)x)_{t<r<s}\|^\theta_{L_2(\ell_{\infty})},
\end{align*}
which tends to 0 when $t$ tends to $\infty$ using the result in the case $p=2$. Hence $(\alpha(\sigma_r)x)_{r>1}$ is approximated by $(\alpha(\sigma_r)x)_{1<r<t}$'s, therefore belongs to $L_p(\M;c_0)$.
\end{proof}

\begin{remark}\label{n34}
The reason why the spectral method only works for $n\geq4$ is due to the fact that estimate \eqref{failure of 3} is not true for $n\leq3$.
\end{remark}

\noindent \textbf{Acknowledgement.} The work is partially supported by the NSF of China-11601396, Funds for Talents of China-413100002 and 1000 Young Talent Researcher Programm of China-429900018-101150.

\end{document}